	\newcommand{\Dhalf}{\Delta_{J^{1/2}}}
	\newcommand{\R}{\mathbb{R}}
	\renewcommand{\P}{P}
	\renewcommand{\t}{^T}
	\newcommand{\fF}{\mbox{\boldmath  $F$}}
	\newcommand{\fu}{\mbox{\boldmath  $u$}}
	\newcommand{\fv}{\mbox{\boldmath  $v$}}
	\newcommand{\fw}{\mbox{\boldmath  $w$}}
	\newcommand{\fz}{\mbox{\boldmath  $z$}}
	\newcommand{\fy}{\mbox{\boldmath  $y$}}
	\newcommand{\fs}{\mbox{\boldmath  $s$}}
	\newcommand{\fq}{\mbox{\boldmath  $q$}}
	\newcommand{\fx}{\mbox{\boldmath  $x$}}
	\newcommand{\fr}{\mbox{\boldmath  $r$}}
	\newcommand{\ft}{\mbox{\boldmath  $t$}}
	\newcommand{\fe}{\mbox{\boldmath  $e$}}
	\newcommand{\hJ}{\mbox{$\hat J_+$}}
	\newcommand{\hJc}{\mbox{$\hat J_c$}}
	\newcommand{\Jp}{\mbox{$J^{1/2}_+$}}
	\newcommand{\Jc}{\mbox{$J^{1/2}_c$}}
	\newtheorem{remark}{Remark}
	\newtheorem{Theorem}{Theorem}[section]
	\newtheorem{Proposition}[Theorem]{Proposition}
	\title{Compact Quasi-Newton Preconditioners for SPD linear systems}
	\author{L. Bergamaschi\footnotemark[2], J. Mar\'{\i}n,\footnotemark[3]
	 \and A. Mart\'{\i}nez\footnotemark[4]}
\begin{document}

	\thispagestyle{plain}
	\maketitle

	\makeatletter
		\smallskip
	\centerline{\bf
		\@date}
		\smallskip
	\makeatother
	\renewcommand{\thefootnote}{\fnsymbol{footnote}}

	\footnotetext[2] {Department of Civil Environmental and Architectural Engineering,
			  University of Padova, Italy, ({\tt luca.bergamaschi@unipd.it}).}
	\footnotetext[3] {Instituto de Matem\'atica Multidisciplinar, Departamento
			  de Matem\'atica Aplicada, 
			  Universidad Polit\'ecnica de Valencia, Spain, ({\tt jmarinma@mat.upv.es}). }
	\footnotetext[4] {Department of Mathematics and Earth Sciences,
			  University of Trieste, Italy, ({\tt amartinez@units.it}).}

	\renewcommand{\thefootnote}{\fnsymbol{footnote}}

	\begin{abstract}
		 In this paper preconditioners for the Conjugate Gradient method are studied to solve the Newton
        system with symmetric positive definite Jacobian. In particular,
        we define a sequence of preconditioners built by means
        of SR1 and BFGS low-rank updates. We develop conditions under which the SR1 update
                maintains the preconditioner SPD.
                Spectral analysis of the SR1 preconditioned Jacobians
        shows an improved eigenvalue distribution as the Newton iteration proceeds.
                A compact matrix formulation of the preconditioner update is developed
                which reduces the cost of its application and is more suitable for parallel implementation.
        Some notes on the implementation of the
        corresponding Inexact Newton method are given and numerical results on
        a number of model problems illustrate the efficiency of the proposed preconditioners.

	\end{abstract}
	\begin{keywords}
	Quasi-Newton method, Krylov iterations, updating preconditioners, Inexact Newton method
	\end{keywords}
	\begin{AMS}
	65F08, 65F10, 65H10, 15A12
	\end{AMS}
	\date{\today}
	\section{Introduction}
	The purpose of this work is to develop efficient preconditioners for the linear systems arising in the Inexact Newton method. 
	The Newton method computes the solution of a system of nonlinear equations $\fF(\fx) = 0$, with $\fF:\mathbb{R}^n \rightarrow \mathbb{R}^n$ iteratively by generating a sequence of approximations. Assuming that each component of $\fF$, $f_i$, is differentiable, the Jacobian matrix $J(\fx)$ is defined by
	\[
	J(\fx)_{ij} = \frac{\partial f_i}{\partial x_j}(\fx),
	\]
	and the Newton step is written as
	\begin{equation}\label{equ:newtonstep1}
	\left\{ \begin{array}{ccc}
	J(\fx_k)\fs_k & = & -\fF(\fx_k) \\ 
	\fx_{k+1} & = & \fx_k + \fs_k 
	\end{array} \right. .
	\end{equation} 
	Thus, the method involves the solution of a linear system with the Jacobian matrix. 
	In this work it will be assumed that the Jacobian is a symmetric positive definite (SPD) matrix, as it happens in 
	a number of applications such as, for instance, unconstrained optimization of convex problems,
	discretization of nonlinear PDEs by  e.g.
	the Picard method~\cite{berputijnme98}, or eigensolution of SPD matrices using
	the (simplified) Jacobi-Davidson method~\cite{notay,bmSIMAX13,bm13jam}. When $J$ is large and sparse,  iterative methods are recommended
	to solve the linear system with the Jacobian matrix in (\ref{equ:newtonstep1}).  

	The issue of devising preconditioners for the sequences of linear systems arising in the application
	of an (Inexact) Newton method to nonlinear problems has been addressed e.g. in \cite{MR1176714,MoralesNocedal98}.
	To construct a sequence of preconditioners $\{P_k\}$ for the Preconditioned
	Conjugate Gradient (PCG) solution of the Newton systems we use Quasi-Newton approximations
	of the Jacobian matrix as in \cite{bergamaschi-et-al-06}. In particular we develop and compare
	two approaches based on the symmetric Quasi-Newton formulas: SR1 and BFGS.	
	These low-rank corrections have been used as preconditioners in previous papers. We mention, among the others,
	\cite{NabVui06} where the BFGS preconditioner is named {\em balancing preconditioner}, \cite{freitagIMA}  where
	the authors use a single-vector version of the SR1 update, and the work in \cite{BMnlaa14} where
	both BFGS and SR1 updates are reviewed and used in the framework of iterative eigensolvers.
	In \cite{bbm08sisc} a bounded deterioration property has been proved for the BFGS preconditioners which implies
	that $\|I - P_k J(\fx_k)\|$ can be made as small as desired depending both on the 
	closeness of $\fx_0$ to the exact solution and of the initial preconditioner $P_0$ 
	to the inverse of the initial Jacobian.
	BFGS-based sequences of preconditioners have been successfully used in the acceleration  of inner
	iterative solvers in the framework of eigensolution of large and sparse SPD matrices \cite{bmSIMAX13}.

	Differently from the BFGS sequence the SR1 update formula is not expected to provide SPD preconditioners
	even if the initial one is so. In this work, we will give some some conditions under which it is possible
	to prove symmetric positive definiteness of the SR1 sequence. Moreover, since in the SR1 case the bounded
	deterioration property does not generally hold, we will prove that the spectral
	distribution of the preconditioned matrix at step $k+1$: $P_{k+1} J(\fx_{k+1})$ is not worst than that 
	of the preconditioned matrix at step $k$.

	Application of this kind of  preconditioners requires, in addition to the application of the initial approximation
	of $J(\fx_0)^{-1}$, a number of scalar products that may considerably slow down the PCG iteration,
	even if limited memory variants are considered.
	In view of a parallel implementation, when a high number of processors is employed, large number of scalar products
	usually reveals a bottleneck of the overall efficiency. To address this problem,
we have developed a compact version of the two classes of preconditioners, following the work in \cite{CompBroyNLAA2018} where
analogous formula are developed in a more general framework, but not 
in connection with the idea of preconditioning. These matrix versions of the preconditioner updates
allow for the use of BLAS-2 kernels for their applications and, at the same time, may reduce considerably the number of communications
among processors.

	The remainder of the paper is as follows: in  Section 2 we review the theoretical properties of the
	BFGS sequences of preconditioners and we develop a matrix version  of the preconditioner update.
	Section 3 analyzes the condition under which the SR1  sequence is SPD, shows the theoretical properties
	of the preconditioned matrices and develops a matrix version  of the preconditioner update.
	In Section 4 we present some numerical results which give evidence of the computational gain
	provided by the compact formulas and of the good behavior of the SR1 update to accelerate the PCG method.
	In Section 5 we draw some conclusions.

	\section{BFGS recurrence as a preconditioner}\label{sec:two}
	BFGS-type formulas are used in the context of Quasi-Newton methods to approximate the true Jacobian as the iteration progresses. 
	At a given Newton step, let $B_k$ be an approximation of the Jacobian matrix $J_k$, while $P_k$ denotes its inverse, i.e., $P_k = B_k^{-1}$.
	By denoting $\fF_{k} =  \fF(\fx_{k})$, let us define the vectors $\fy_k = \fF_{k+1} - \fF_k$, $\fs_k = \fx_{k+1}-\fx_k$. The BFGS formula is
	  derived by imposing the following conditions on the inverse Jacobian matrix: 
	 \begin{itemize}
	 \item $P_{k+1}$ must be symmetric and positive definite.
	 \item It must satisfy the secant condition, $P_{k+1}\fy_k = \fs_k$.
	 \item Closeness condition: among all the symmetric matrices satisfying the secant equation, $P_{k+1}$ must be the closest matrix to $P_k$. Using  Frobenius norm:
	 \begin{eqnarray}
	  P_{k+1} = \min_P \| P - P_k \| \label{equ:minimproperty}\\
	  \mbox{subject to} \ \ P = P^T, \ \ P\fy_k = \fs_k. \nonumber
	\end{eqnarray}  
	 \end{itemize}
	The unique solution to (\ref{equ:minimproperty}) is (see \cite{noWr1999}) 
	\begin{equation} \label{equ:invBFGS}
	P_{k+1} = V_k^T P_k V_k + \rho_k \fs_k \fs_k^T \ ,
	\end{equation}
	where 
	\begin{equation}\label{equ:Vk}
	V_k =  \left( I - \rho_k \fy_k \fs_k^T \right) \ , \ \rho_k = 1/\fy_k^T \fs_k .
	\end{equation}
	By applying the Sherman-Morrison-Woodbury formula to (\ref{equ:invBFGS}) an update formula for the  Jacobian matrix approximation $B_k$ is obtained as
	\begin{equation}\label{equ:directBFGS}
	B_{k+1} = B_k - \dfrac{B_k \fs_k \fs_k^T B_k}{\fs_k^T B_k \fs_k} + \dfrac{\fy_k \fy_k^T}{\fy_k^T \fs_k} \ .
	\end{equation}
	
	Moreover, in order to compute the approximate Jacobian efficiently, 
	especially when the number of non-linear iterations is large, limited-memory Quasi-Newton methods where introduced. L-BFGS methods store only a few pairs 
	of vectors $\{ \fs_i, \fy_i \}$ instead of all vectors generated during the iteration process, i.e., early iteration pairs are discarded. Therefore, the computational and memory storage requirements at each Newton iteration are reduced. This may be helpful for solving large-scale nonlinear problems. More precisely, from equations (\ref{equ:invBFGS}) and (\ref{equ:Vk}) we can see that the matrix $P_{k+1}$ is obtained by updating the $P_k$ with the pair 
	$\{ \fs_k, \fy_k \}$. After the new iterate is computed, the oldest vector pair in the set is deleted and replaced by the new pair. Thus, the set of vector 
	pairs includes information from the $k_{\max}$ most recent iterations. 	
	Moreover if $(k\mod k_{\max}) = 0$, a new initial preconditioner is computed as $P_0 \equiv P_k$.
	Starting with an initial matrix $P_0$, formula (\ref{equ:invBFGS}) can be written as
	\begin{eqnarray}\label{equ:H_k}
		P_k & = & \left( V_{k-1}^T \ldots V_{k-k_{\max}}^T \right) P_0 \left( V_{k-k_{\max}} \ldots V_{k-1} \right) \nonumber \\
		& + & \rho_{k-k_{\max}} \left( V_{k-1}^T \ldots V_{k-k_{\max}+1}^T \right) \fs_{k-k_{\max}} \fs_{k-k_{\max}}^T \left( V_{k-k_{\max}+1} \ldots V_{k-1} \right) \nonumber \\
		& + & \rho_{k-k_{\max}+1} \left( V_{k-1}^T \ldots V_{k-k_{\max}+2}^T \right) \fs_{k-k_{\max}+1} \fs_{k-k_{\max}+1}^T \left( V_{k-k_{\max}+2} \ldots V_{k-1} \right) \nonumber \\
	& \vdots & \nonumber \\
	& + & \rho_{k-1}\fs_{k-1} \fs_{k-1}^T \ .
	\end{eqnarray}
	From this expression, the computation of the product $\hat \fr = P_k \fr$ for a given residual vector $\fr$ 
	can be done recursively by performing a sequence of inner products and vector summations involving $\fF_k$ and the pairs $\{ \fs_i, \fy_i \}$, $i=k-k_{\max},\ldots, k-1$, see Algorithm \ref{alg:L-BFGS}. The computational cost is (at most) $2k_{\max}$ daxpys and $2k_{\max}$ dot products. 
	
	\begin{algorithm}[ht]
		\begin{algorithmic}
			\FOR{$i = k-1,  \ldots, k-k_{\max}$} 
			\State  $\alpha_i = \rho_i \fs_i^T \fr$
			\State  $\fr := \fr - \alpha_i \fy_i$
			\ENDFOR
			\State $\hat \fr = P_0 \fr$
			\FOR{$i = k-k_{\max},  \ldots, k-1$} 
			\State  $\beta = \rho_i \fy_i^T \hat \fr$
			\State  $\hat \fr := \hat \fr + (\alpha_i - \beta) \fs_i$
			\ENDFOR
		\end{algorithmic}
		\caption{L-BFGS two-loop recursion to compute $\hat \fr = P_k \fr$}\label{alg:L-BFGS} 
	\end{algorithm}

	The implementation of the Inexact-Newton method~\cite{DemboEisenstatSteihaug82} requires the definition of a stopping criterion for the linear solver (\ref{equ:newtonstep1}) based on the nonlinear residual. Thus, the linear iteration is stopped with the test
	\begin{equation}
	\| J(\fx_k)\fs_k + F(\fx_k)\| \le \eta_k \| F(\fx_k) \| \ .
	\end{equation}
	The sequence $\{\eta_k\}$ will be chosen such that $\eta_k = O(\|\fF(\fx_k) \|)$.
	This will guarantee 
	quadratic convergence of the Inexact Newton method and as a consequence, the following result.
	\begin{Proposition}
	Define $\fe_k = \fx_{\ast} - \fx_k$.
	There exist $\delta > 0$  and $0 < r < 1$ such that if $\|\fe_0\| < \delta $ then
	$\|\fe_{k+1}\| \le r\|\fe_{k}\|$ for every $k$. \\[.4em]
	\label{prop}
	\end{Proposition}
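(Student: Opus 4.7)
The plan is to invoke the classical convergence theory for the Inexact Newton method (as in Dembo, Eisenstat and Steihaug~\cite{DemboEisenstatSteihaug82}) and exhibit quadratic convergence, from which the stated linear bound follows immediately. The hypothesis $\eta_k = O(\|\fF(\fx_k)\|)$ is precisely the forcing term needed to upgrade the generic linear convergence of Inexact Newton to quadratic convergence.

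First, I would use that $J(\fx_\ast)$ is SPD together with continuity of $J$ to pick a ball $B(\fx_\ast,\delta_0)$ on which $J(\fx)$ is uniformly invertible with $\|J(\fx)^{-1}\| \le M$, and on which $J$ is Lipschitz with some constant $L$. Writing $\fr_k = J(\fx_k)\fs_k + \fF(\fx_k)$ for the linear residual, the step satisfies $\fs_k = -J(\fx_k)^{-1}\fF(\fx_k) + J(\fx_k)^{-1}\fr_k$, so
\begin{equation*}
\fe_{k+1} = \fe_k + \fs_k = J(\fx_k)^{-1}\bigl[J(\fx_k)\fe_k - \fF(\fx_k)\bigr] + J(\fx_k)^{-1}\fr_k.
\end{equation*}
Next I would expand $\fF(\fx_k) = \fF(\fx_k) - \fF(\fx_\ast) = -\int_0^1 J(\fx_\ast - t\fe_k)\,\fe_k\,dt$ and apply the Lipschitz estimate on $J$ to conclude $\|J(\fx_k)\fe_k - \fF(\fx_k)\| \le \tfrac{L}{2}\|\fe_k\|^2$.

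For the residual term I would use the stopping criterion together with $\eta_k = O(\|\fF(\fx_k)\|)$: there exists a constant $c$ such that $\eta_k \le c\|\fF(\fx_k)\|$, and by continuity of $\fF$ near $\fx_\ast$ together with $\fF(\fx_\ast)=0$ one has $\|\fF(\fx_k)\| \le \bar L\|\fe_k\|$ on $B(\fx_\ast,\delta_0)$. Hence
\begin{equation*}
\|\fr_k\| \le \eta_k\|\fF(\fx_k)\| \le c\,\bar L^2\,\|\fe_k\|^2.
\end{equation*}
Combining the two estimates and using $\|J(\fx_k)^{-1}\| \le M$ yields $\|\fe_{k+1}\| \le K\|\fe_k\|^2$ with $K = M\bigl(\tfrac{L}{2} + c\bar L^2\bigr)$.

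Finally, given any $r\in(0,1)$, I would set $\delta = \min\{\delta_0,\, r/K\}$. Then $\|\fe_0\|<\delta$ implies $\|\fe_1\| \le K\delta\|\fe_0\| \le r\|\fe_0\| < \delta$, and inductively $\|\fe_{k+1}\| \le r\|\fe_k\|$ for every $k$, since $\fx_k$ never leaves $B(\fx_\ast,\delta_0)$. The main technical nuisance is really just verifying that the iterates stay inside the neighborhood where the Lipschitz and invertibility bounds hold; this is handled by the induction itself, since each contraction step shrinks $\|\fe_k\|$ and hence keeps $\fx_k$ inside the ball. No deep new ingredient is required beyond the Taylor/Lipschitz machinery and the forcing-term hypothesis.
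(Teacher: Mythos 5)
Your proposal follows essentially the same route as the paper, which offers no proof of its own and simply appeals to the classical Dembo--Eisenstat--Steihaug theory: the forcing terms $\eta_k = O(\|\fF(\fx_k)\|)$ give quadratic convergence $\|\fe_{k+1}\|\le K\|\fe_k\|^2$, from which the linear contraction follows by shrinking $\delta$; your induction keeping the iterates inside the ball is the standard and correct way to close the argument. The only flaw is a sign slip: since $\fe_k=\fx_{\ast}-\fx_k$ and $\fx_{k+1}=\fx_k+\fs_k$, one has $\fe_{k+1}=\fe_k-\fs_k$, so the Taylor remainder you must bound is $J(\fx_k)\fe_k+\fF(\fx_k)$ (which is indeed $O(\|\fe_k\|^2)$ by the Lipschitz estimate), whereas the quantity $J(\fx_k)\fe_k-\fF(\fx_k)$ appearing in your display is of order $\|\fe_k\|$ and does not satisfy the claimed bound; with the sign corrected the rest of the argument goes through unchanged.
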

	If the Jacobian matrices are SPD and so is $P_0$, then $P_k$ is also SPD under the condition $\fs_k^T\fy_k > 0$ (see Lemma 4.1.1 in \cite{mybib:bookKelley2}).  Let $\fe_0 = \fx_* - \fx_0$, $E_k = J_k^{-1}- P_{k}$. The next result establishes that $\| I - P_k J(\fx_k) \|$ can be made arbitrarily small by suitable choices of the initial guess $\fx_0$ and the initial preconditioner $P_0$.
	\begin{Theorem}[Theorem 3.6 in \cite{bbm08sisc}]
		For a fixed $\varepsilon_1 > 0$, there are $\delta_0$, $\delta_B$ such that if $\| \fe_0 \| < \delta_0$ and $\|E_0\| < \delta_B$ then
		\[\| I - P_k J_k \|< \varepsilon_1.\]
	\end{Theorem}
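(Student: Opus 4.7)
The plan is to run the standard bounded-deterioration argument of quasi-Newton theory and then convert a bound on $\|E_k\|$ into one on $\|I-P_kJ_k\|$. The two ingredients are: a one-step estimate controlling how much $\|E_k\|$ can grow from $k$ to $k+1$, and the linear convergence of $\{\fx_k\}$ guaranteed by Proposition~\ref{prop}, which makes the accumulated growth summable.

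The first step is to prove a deterioration inequality of the form
\[
\|E_{k+1}\| \le (1 + c_1\|\fe_k\|)\|E_k\| + c_2\|\fe_k\|,
\]
valid as soon as $\fx_k$ lies in a fixed ball around $\fx_*$. Using \eqref{equ:invBFGS} and the secant equation $P_{k+1}\fy_k=\fs_k$, together with the Taylor expansion $\fy_k = J_*\fs_k + O(\|\fe_k\|\,\|\fs_k\|)$, one gets that $P_{k+1}$ satisfies the secant equation for $J_*$ up to a $O(\|\fe_k\|)$ remainder; combining this with the minimum-change (weighted Frobenius) characterization of BFGS and the expansion $J_{k+1}^{-1} = J_*^{-1} + O(\|\fe_{k+1}\|)$ produces the inequality above. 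The positive lower bound on $\fy_k^T\fs_k$, required for $\rho_k$ and $\|V_k\|$ to be uniformly controlled, follows from SPD-ness and continuity of $J$ near $\fx_*$.

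The second step is to iterate. By Proposition~\ref{prop}, $\|\fe_k\|\le r^k\|\fe_0\|$ with $r<1$ whenever $\|\fe_0\|<\delta$. Unrolling the one-step bound yields
\[
\|E_k\| \le \Bigl(\prod_{j=0}^{k-1}(1+c_1 r^j\|\fe_0\|)\Bigr)\|E_0\| + c_2\|\fe_0\|\sum_{j=0}^{k-1} r^j\prod_{i=j+1}^{k-1}(1+c_1 r^i\|\fe_0\|),
\]
and since $\sum_j r^j<\infty$, both the infinite product and the infinite sum are bounded uniformly in $k$ by constants $C_1,C_2$ depending only on $r,c_1,c_2,\delta$. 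Hence $\|E_k\|\le C_1\|E_0\|+C_2\|\fe_0\|$ for all $k$.

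The final step converts this into the claim: $\|I-P_kJ_k\| = \|(J_k^{-1}-P_k)J_k\| \le \|E_k\|\,\|J_k\| \le M\bigl(C_1\delta_B+C_2\delta_0\bigr)$, where $M$ is any uniform bound on $\|J(\fx)\|$ over the relevant ball. Choosing $\delta_0 < \varepsilon_1/(2MC_2)$ and $\delta_B < \varepsilon_1/(2MC_1)$ (and at most the $\delta$ supplied by Proposition~\ref{prop}) gives $\|I-P_kJ_k\|<\varepsilon_1$. The main obstacle is the one-step estimate: keeping the rank-one term $\rho_k\fs_k\fs_k^T$ and the factor $V_k^TP_kV_k$ in check simultaneously requires a uniform positive lower bound on $\fy_k^T\fs_k/\|\fs_k\|^2$, which is where the SPD hypothesis on $J$ and the smallness of $\|\fe_0\|$ genuinely enter.
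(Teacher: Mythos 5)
The paper does not prove this result---it imports it verbatim as Theorem 3.6 of \cite{bbm08sisc}---and your argument reproduces essentially the route taken there: a one-step bounded-deterioration inequality for the inverse BFGS update, made summable by the linear convergence of Proposition~\ref{prop} (which, crucially, is not circular here since the Newton steps do not depend on $P_k$), followed by the conversion $\|I-P_kJ_k\|\le\|E_k\|\,\|J_k\|$. The only caveat is that the one-step inequality, which you correctly identify as the crux, is sketched rather than proved, but the sketch (secant condition plus the Taylor expansion of $\fy_k$, the minimum-change characterization, and the uniform lower bound on $\fy_k^T\fs_k/\|\fs_k\|^2$ coming from SPD-ness near $\fx_*$) assembles the right ingredients in the standard way.
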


\subsection{Compact inverse representation of the BFGS update formula}
Following \cite{byrd-et-al,noWr1999} we write a compact formula for the direct update
\begin{equation}
\label{direct}
B_k = B_0 - 
\begin{bmatrix} B_0 S_k & Y_k \end{bmatrix}
	\begin{bmatrix} S_k^T B_0 S_k & L_k \\ L_k^T & -D_k \end{bmatrix} ^{-1}
\begin{bmatrix}  S_k^T B_0 \\ Y_k^T \end{bmatrix}
\end{equation}
where
\[ S_k = \begin{bmatrix} \fs_0, & \ldots, & \fs_{k-1}\end{bmatrix}, \qquad
Y_k = \begin{bmatrix} \fy_0, & \ldots, & \fy_{k-1}\end{bmatrix} \]
and
\[ (L_k) _{(i,j)} = \begin{cases} \fs_{i-1}^T \fy_{j-1} & \text{if} \ i > j, \\
0  & \text{otherwise} \end{cases}, \qquad D_k = \text{diag}\begin{bmatrix}
\fs_0^T \fy_0, & \ldots, & \fs_{k-1}^T \fy_{k-1}\end{bmatrix} . \]

By applying the Sherman-Woodbury-Morrison formula  to (\ref{direct}) one obtains an explicit formula
for the inverse of $B_k$, i.e.,  $P_k$. We have
\begin{eqnarray*}
	\label{inverse1} 
	P_k &=& P_0 -  \begin{bmatrix} S_k & Z_k \end{bmatrix}
	\begin{bmatrix} R_k^{-T} H_k R_k^{-1} & -R_k^{-T} \\ -R_k^{-1} & 0 \end{bmatrix}
	\begin{bmatrix}  S_k^T  \\ Z_k^T \end{bmatrix} ,
\end{eqnarray*}
where $S_k, Y_k$ are as before and
%
\begin{equation}
	\label{ZRH}
Z_k = P_0 Y_k, \qquad  (R_k) _{i,j} = \begin{cases} \fs_{i-1}^T \fy_{j-1} & \text{if} \ i \le j, \\
0  & \text{otherwise} \end{cases}, \qquad H_k = D_k +  Y_k^T P_0 Y_k .
\end{equation}

\subsection{Computation of $\hat \fr = P_k \fr$}
At step $k$, previously to the PCG solution of the $k$-th Newton system, we have to compute 
$\fz_{k-1} = P_0 \fy_{k-1}$ by solving the linear system $B_0 \fz_{k-1} = \fy_{k-1}$. Then we have to form
recursively
\[ S_k = \begin{bmatrix} S_{k-1}\  \fs_{k-1} \end{bmatrix}, \quad
Z_k = \begin{bmatrix} Z_{k-1}\  \fz_{k-1} \end{bmatrix}, \]
\[R_k = \begin{bmatrix} R_{k-1} &  S_{k-1}^T \fy_{k-1} \\ 0 & \fs_{k-1}^T \fy_{k-1} \end{bmatrix},
\quad
H_k = \begin{bmatrix} H_{k-1} &  Z_{k-1}^T \fy_{k-1} \\ \fy_{k-1}^T Z_{k-1} & (s_{k-1}  + \fz_{k-1})^T \fy_{k-1} \end{bmatrix} .
\]
During the PCG solver, application of $P_k$ to the residual vector takes the
steps indicated in Algorithm \ref{alg:compactPkr-BFGS}.

\begin{algorithm}[ht]
	\begin{algorithmic}
		\State 1. Solve $B_0 \hat{\fr} = \fr$;
		\State 2. $\fw_1 = S_k^T \fr$; 
		\State 3. $\fw_2 = Z_k^T \fr$;
		\State 4. Solve $R_k \fq_2 = \fw_1;$ 
		\State 5. Solve $R_k^T \fq_1 = \fw_2 - H_k \fq_2$;
		\State 5. $\hat{\fr}  = \hat{\fr} - S_k \fq_1 - Z_k \fq_2$.
	\end{algorithmic}
	\caption{Computation of $\hat \fr = P_k \fr$ with compact (L-)BFGS}\label{alg:compactPkr-BFGS} 
\end{algorithm}

\subsection{Limited memory implementation L-BFGS}
If $k > k_{\max} $ then the update of matrices $S_k, Z_k, U_k$ and $H_k$ changes as follows
(in MATLAB notation). First, rewrite the matrices by shifting elements corresponding to the last $k_{\max}-1$ updates: 
\begin{eqnarray*}
	S_{k-1} & \leftarrow & S_{k-1}(:,2:k_{\max}) \\
	Z_{k-1} & \leftarrow & Z_{k-1}(:,2:k_{\max})\\
	R_{k-1} &\leftarrow & R_{k-1}(2:k_{\max}, 2:k_{\max}) \\
	H_{k-1} & \leftarrow & H_{k-1}(2:k_{\max}, 2:k_{\max}).
\end{eqnarray*}
Then, update the matrices with the new entries,
\[ S_k = \begin{bmatrix} S_{k-1}\  \fs_{k-1} \end{bmatrix}, \quad
Z_k = \begin{bmatrix} Z_{k-1}\  \fz_{k-1} \end{bmatrix}, \]
\[R_k = \begin{bmatrix} R_{k-1} &  S_{k-1}^T \fy_{k-1} \\ 0 & \fs_{k-1}^T \fy_{k-1} \end{bmatrix},
\quad
H_k = \begin{bmatrix} H_{k-1} &  Z_{k-1}^T \fy_{k-1} \\ \fy_{k-1}^T Z_{k-1} & (s_{k-1}  + \fz_{k-1})^T \fy_{k-1}  \end{bmatrix} .
\]

\section{Preconditioner based on SR1 recurrence}\label{sec:sr1}

In the L-BFGS updating formula, the matrix $B_{k+1}$ (or $P_{k+1}$) differs from its predecessor $B_{k}$ (or $P_{k}$) by a 
rank$-2$ matrix. There is a simpler rank$-1$ update that maintains symmetry of the matrix and allows it to satisfy the secant equation. But unlike the rank-2 formulas, this symmetric-rank-1 update, also called SR1,  does not guarantee that the updated matrix maintains positive definiteness. 
The direct SR1 update formula is given by (see \cite{noWr1999})
\begin{equation} \label{equ:directSR1}
B_{k+1} = B_{k} + \dfrac{(\fy_k - B_k \fs_k)(\fy_k - B_k \fs_k)^T}{(\fy_k - B_k \fs_k)^T \fs_k} \ .
\end{equation}
By applying the Sherman-Morrison formula, one can obtain the corresponding update formula for the preconditioner
(we will omit subscript $_k$ in vectors $\fs$ and $\fy$ from now on)
	\begin{equation}
	\P_{k+1} =P_k  + \frac{(\fs - P_k \fy) (\fs - P_k \fy)^T}{\fy^T  (\fs - P_k \fy)}
	\label{SR1}
	\end{equation}
	satisfying the secant condition
	\[ P_{k+1} \fy = \fs.\]

	Formula (\ref{SR1}) does not generally provide a sequence of SPD matrices and may not be well defined
	since the scalar $\fy^T  (\fs - P_k \fy)$ can in principle be zero.
	In the sequel of this Section we will discuss these two issues and we will prove
	that $P_{k+1}J(\fx_{k+1})$ has a more favorable eigenvalue distribution than
	 $P_{k}J(\fx_{k})$.

	Let us denote with $\Omega$ an open subset of $\R^{n}$,
	we will make the following {\em standard assumptions} on $\fF$ which we will assume to hold throughout this section.\\[-.2em]
	\begin{figure}[h!]
	{\bf Standard Assumptions:}\\[-.4em]
	\begin{enumerate}
	\item  Equation $\fF(\fx) = 0$ has a solution $\fx_*$.
	\item $J(\fx): \Omega \to \R^{n \times n}$ is Lipschitz continuous
			 with Lipschitz constant $\gamma$.
		 \item $J_*\equiv J(\fx_*)$ is nonsingular (Set $\alpha = \left\|{J_*}^{-1}\right\|$) .
	\end{enumerate}
	\end{figure}

	\noindent
	{\bf Notation.}
	It is now convenient to indicate
	with the subscript $c$ every vector or matrix referring to the
	current iterate and with the subscript $+$ every quantity referring
	to the new iterate $k+1$. Moreover we will use $J_+$ and $J_c$ instead of $J(\fx_{k+1})$ and $J(\fx_k)$
	and $\fF_+, \fF_c$ instead of $\fF(\fx_{k+1})$, $\fF(\fx_k)$.

	Following this notation  Newton's method
	can be stated as
	\begin{eqnarray}
	J_c \fs &=& -\fF_c  \nonumber \\[-0.5em]
	\label{exnewt1} \\[-0.5em]
	\fx_+ &=& \fx_c + \fs \nonumber
	\end{eqnarray}
	We define the error vectors $ \
	 \fe_+ =\fx_* -\fx_+, \qquad
	   \fe_c =\fx_* -\fx_c .$
	\begin{lemma}
	Let $\|\fe_c\| \le \delta_0 < \dfrac{1}{\alpha \gamma} $. Then  $J_c$ is invertible and
		\[\|J_c^{-1} \| \le \dfrac{\alpha}{1 - \alpha \gamma\delta_0} \equiv \alpha_c\]
	\label{Banach}
	\end{lemma}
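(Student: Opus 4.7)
The lemma is named Banach, which already signals the approach: I want to apply the standard Banach perturbation lemma (Neumann series argument) to write $J_c$ as a small perturbation of the invertible reference matrix $J_*$.

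First, I would factor $J_c = J_*\bigl(I + J_*^{-1}(J_c - J_*)\bigr)$. To control the correction term, I would use the Lipschitz hypothesis on $J$ together with the bound on $\fe_c$: since $\fx_c$ lies in a ball of radius $\delta_0$ around $\fx_*$,
\[
\|J_c - J_*\| \;\le\; \gamma\|\fx_c - \fx_*\| \;=\; \gamma\|\fe_c\| \;\le\; \gamma\delta_0 .
\]
Combining with $\|J_*^{-1}\| = \alpha$ and submultiplicativity,
\[
\|J_*^{-1}(J_c - J_*)\| \;\le\; \alpha\gamma\delta_0 \;<\; 1,
\]
where strict inequality is exactly the hypothesis $\delta_0 < 1/(\alpha\gamma)$.

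Next, I would invoke the Banach lemma: whenever $\|M\| < 1$, the matrix $I + M$ is invertible and $\|(I+M)^{-1}\| \le 1/(1 - \|M\|)$ (proved by summing the Neumann series $\sum_{j\ge 0}(-M)^j$, which converges absolutely in operator norm). Applied to $M = J_*^{-1}(J_c - J_*)$, this gives invertibility of $I + J_*^{-1}(J_c-J_*)$, hence of $J_c$, together with
\[
\bigl\|\bigl(I + J_*^{-1}(J_c - J_*)\bigr)^{-1}\bigr\| \;\le\; \frac{1}{1 - \alpha\gamma\delta_0}.
\]
Finally, from the factorization,
\[
\|J_c^{-1}\| \;=\; \bigl\|\bigl(I + J_*^{-1}(J_c - J_*)\bigr)^{-1} J_*^{-1}\bigr\|
\;\le\; \frac{\alpha}{1 - \alpha\gamma\delta_0} \;=\; \alpha_c,
\]
which is the required estimate.

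There is no real obstacle here; the only thing to be careful about is ensuring the strict inequality $\alpha\gamma\delta_0 < 1$ so that the Neumann series converges and the denominator $1 - \alpha\gamma\delta_0$ is positive. Everything else is bookkeeping.
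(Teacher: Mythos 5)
Your proof is correct and follows exactly the route the paper intends: the paper simply cites the Banach lemma (Theorem 1.2.1 in Kelley), and you have written out that same Neumann-series perturbation argument around $J_*$ in full detail.
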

	\begin{proof}
	Follows from  Theorem 1.2.1 in \cite{kelley95}, also known as Banach lemma.
	\end{proof}

		\noindent
		The following Lemma will provide relations between the vectors $\fs, \fy$ and $\fe_c$.
		\begin{lemma}
			\label{lemma}
		There exists $\delta > 0$ such that if 
	\[ 0 < \|\fx_c - \fx_*\| \le \delta \quad  \]
	then the following relations hold
			\begin{eqnarray}
				\label{se}
				\|\fs\| \  & \le & 2 \|\fe_c\| \\
				\label{ys0}
			\fy &=& J_c \fs + \Delta_1 \fs, \qquad \text{with} \quad \|\Delta_1\| \le \gamma \|\fe_c\| \\
				\label{ynorm}
				\|\fy\| & \ge & \frac{1 - \alpha_c \gamma  \delta}{\alpha_c} \|\fs\| \equiv c_1 \|\fs\|
			\end{eqnarray}
		\end{lemma}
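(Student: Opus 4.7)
The plan is to derive all three estimates from the fundamental theorem of calculus applied to $\fF$, together with the Lipschitz bound on $J$ and the invertibility estimate for $J_c$ from Lemma \ref{Banach}. Throughout I would choose $\delta \le \delta_0$ so that Lemma \ref{Banach} applies and $\alpha_c$ is well defined.

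First, for (\ref{se}), I would write
\[
\fF_c = \fF_c - \fF_* = \Bigl(\int_0^1 J(\fx_* + t(\fx_c-\fx_*))\,dt\Bigr)(\fx_c-\fx_*) = -M\fe_c,
\]
with $M := \int_0^1 J(\fx_* - t\fe_c)\,dt$. Using $\fs = -J_c^{-1}\fF_c = J_c^{-1}M\fe_c$ and splitting $M = J_c + (M-J_c)$ gives $\fs = \fe_c + J_c^{-1}(M-J_c)\fe_c$. The Lipschitz hypothesis yields $\|J(\fx_*-t\fe_c) - J_c\| \le \gamma(1-t)\|\fe_c\|$, so $\|M-J_c\| \le \tfrac{\gamma}{2}\|\fe_c\|$, whence $\|\fs\| \le (1 + \tfrac{1}{2}\alpha_c\gamma\|\fe_c\|)\|\fe_c\|$. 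Choosing $\delta \le 2/(\alpha_c\gamma)$ gives (\ref{se}).

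Second, for (\ref{ys0}), I would again invoke the fundamental theorem applied on the segment from $\fx_c$ to $\fx_+ = \fx_c + \fs$:
\[
\fy = \fF_+ - \fF_c = \Bigl(\int_0^1 J(\fx_c+t\fs)\,dt\Bigr)\fs = J_c\fs + \Delta_1\fs,
\]
where $\Delta_1 := \int_0^1[J(\fx_c+t\fs) - J_c]\,dt$. Lipschitz continuity gives $\|J(\fx_c+t\fs) - J_c\| \le \gamma t\|\fs\|$, so $\|\Delta_1\| \le \tfrac{\gamma}{2}\|\fs\|$, and applying (\ref{se}) we get $\|\Delta_1\| \le \gamma\|\fe_c\|$ as claimed.

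Third, (\ref{ynorm}) follows by rearranging (\ref{ys0}) as $\fs = J_c^{-1}(\fy - \Delta_1\fs)$ and taking norms:
\[
\|\fs\| \le \alpha_c\|\fy\| + \alpha_c\gamma\|\fe_c\|\,\|\fs\|,
\]
so $(1-\alpha_c\gamma\|\fe_c\|)\|\fs\| \le \alpha_c\|\fy\|$. Provided $\delta < 1/(\alpha_c\gamma)$ (the main quantitative requirement, shrinking $\delta$ further if needed), the factor on the left is positive, and monotonicity in $\|\fe_c\| \le \delta$ yields $\|\fy\| \ge \frac{1-\alpha_c\gamma\delta}{\alpha_c}\|\fs\| = c_1\|\fs\|$. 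The only real obstacle is bookkeeping: one must pick $\delta$ small enough simultaneously for Lemma \ref{Banach} to apply, for the factor $1-\alpha_c\gamma\delta$ in $c_1$ to be positive, and for $1+\tfrac{1}{2}\alpha_c\gamma\delta \le 2$ in (\ref{se}); all three can be secured by taking, e.g., $\delta = \min\{\delta_0,\,1/(2\alpha_c\gamma)\}$.
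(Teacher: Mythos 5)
Your proof is correct, and for (\ref{ys0}) and (\ref{ynorm}) it coincides with the paper's argument: the same fundamental-theorem-of-calculus decomposition $\fy = J_c\fs + \Delta_1\fs$ with $\|\Delta_1\| \le \tfrac{\gamma}{2}\|\fs\| \le \gamma\|\fe_c\|$, and essentially the same rearrangement of $\fs = J_c^{-1}(\fy-\Delta_1\fs)$ combined with Lemma \ref{Banach} to get the lower bound on $\|\fy\|$. Where you genuinely diverge is (\ref{se}). The paper obtains it in one line from the identity $\fs = \fe_c - \fe_+$ together with Proposition \ref{prop}: $\|\fs\| \le \|\fe_+\| + \|\fe_c\| \le (1+r)\|\fe_c\| \le 2\|\fe_c\|$. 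You instead expand $\fF_c = -M\fe_c$ with $M = \int_0^1 J(\fx_* - t\fe_c)\,dt$ and estimate $\fs = J_c^{-1}M\fe_c$ directly via the Lipschitz bound $\|M-J_c\| \le \tfrac{\gamma}{2}\|\fe_c\|$. Your route buys self-containedness --- it does not presuppose the local convergence result --- but it silently uses the exact Newton relation $\fs = -J_c^{-1}\fF_c$, whereas the surrounding framework is the \emph{inexact} Newton method, in which the linear solve only satisfies $\|J_c\fs + \fF_c\| \le \eta_c\|\fF_c\|$; the paper's appeal to Proposition \ref{prop} absorbs that inexactness automatically. Since Section 3 states the step with exact equality in (\ref{exnewt1}), your argument is consistent with the stated hypotheses, and in the inexact case the extra residual contributes only an $O(\eta_c)\|\fe_c\|$ term that is easily absorbed into the constant $2$ for $\delta$ small, so both arguments are sound.
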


		\begin{proof}
			From $\fs = \fe_c - \fe_+$ we have, by Proposition \ref{prop},
			\[ \|\fs \| \le \|\fe_+\| + \|\fe_c\| \le (1+r) \|\fe_c\| \le 2 \|\fe_c\|.\]
	Now from the standard assumptions, and from the fundamental theorem of calculus we have that 
	\begin{equation}
	\label{ys1}
	\fy =  \fF_+ -  \fF_c = \int_0^1 J(\fx_c+t\fs)\fs dt = J_c \fs + \int_0^1 (J(\fx_c+t\fs) - J_c)\fs dt = 
		J_c\fs + \Delta_1 \fs, 
	\end{equation}
	where $\Delta_1 =  \int_0^1 \left( J(\fx_c+t\fs) - J_c \right)dt\ $.
			From the standard assumptions, (\ref{se}) and Proposition \ref{prop}
			\[ \|\Delta_1\| \le \int_0^1 \|\left( J(\fx_c+t\fs) - J_c \right) \| dt \le
			 \int_0^1 \gamma \|\fx_c+t\fs - \fx_c\| dt  =
			 \gamma \int_0^1 t \|\fs\| dt  \le \gamma \|\fe_c\|.
			\]
			Finally from
		\[ \|J_c^{-1}\| \|\fy\| \ge \|J_c^{-1} \fy\| = \|\fs + J_c^{-1} \Delta_1 \fs\| \ge
	\|\fs\| - \|J_c^{-1} \Delta_1 \fs\| \ge \left(1 - \alpha_c \gamma \|\fe_c\|\right) \|\fs\| \]
			if $\delta < \dfrac{1}{\alpha_c \gamma}$ then we have 
		\[ \|\fy\| \ge \frac{\left(1 - \alpha_c \gamma \delta \right) \|\fs\|} {\alpha_c}. \]
		\end{proof}

		\noindent
		\subsection{Well definition and positive definiteness of $P_{k+1}$}\label{sec:sr1_31}
		We now give some evidence that the sequence $\{P_k\}$ can be made SPD provided that $P_0$ is so.
		Assume that $P_c$ is SPD. Then $P_+$ is SPD   if the denominator in (\ref{SR1}) is positive.  
		Using the results of the previous Lemma we can rewrite the denominator in (\ref{SR1}), using
	 $\fs =  J_c^{-1} \fy - J_c^{-1} \Delta_1 \fs$ from (\ref{ys0}),
	as 
	\[ 
	\fy^T  (\fs - P_c \fy) = \fy^T \fs - \fy^T P_c \fy = \fy^T J_c^{-1} \fy - \fy^T P_c \fy - \fy^T J_c^{-1}\Delta_1 \fs. \]
	Dividing both sides by $\|\fy\|^2 $ we have that 
	\begin{equation}
		\label{qj}
	\frac{\fy^T  (\fs - P_c \fy) }{\|\fy\|^2} 
	= \frac{\fy^T J_c^{-1} \fy }{\fy^T \fy} - \frac{\fy^T P_c \fy }{\fy^T \fy} - \frac{\fy^T J_c^{-1}\Delta_1 \fs}{\fy^T \fy} \ge q_{{J^{-1}_c}}(\fy)- q_{P_c}(\fy) - c_2 \delta  \end{equation}
	where $q_A(\fy)$  denotes the Rayleigh quotient for a given SPD matrix $A$ and $c_2 = 
	\dfrac{ \gamma(1 + \alpha_c \gamma \delta)}{c_1^2} $ since
	\[ \left | \frac{\fy^T J_c^{-1}\Delta_1 \fs}{\fy^T \fy}   \right| \le \frac{(1 + \alpha_c \gamma \delta) (\gamma \delta)\|\fs\|^2}
	{c_1^2 \|\fs\|^2} = \frac{(1 + \alpha_c \gamma \delta) \gamma}{c_1^2}  \delta = c_2\delta. \]
\begin{remark}
	\label{rem}
	The expression on the right of (\ref{qj})  is expected to be positive for the following reason.
	The initial preconditioner $P_0$ is usually computed as the inverse of a Cholesky factorization of $A \equiv J(\fx_0)$ i.e. $P_0 = (L L^T)^{-1}$.
	As known this kind of preconditioner  is able to capture the largest eigenvalues of the coefficient matrix rather
	than the lowest ones. As a consequence the  typical spectrum of the preconditioned matrices is an interval 
	$[\alpha, \beta]$ with $\alpha$ close to zero and $\beta$ slightly larger than 1.  
	The sign of the  eigenvalues of $A^{-1} - (L L^T)^{-1} = L^{-T} \left(L^T A^{-1} L - I\right) L^{-1}$ 
	is related to the sign of those of $L^T A^{-1} L - I$ which are therefore larger than $\frac{1}{\beta} - 1$.
	As an example we report the extremal  eigenvalues of the preconditioned 
	 matrix arising from the FD discretization  of the Poisson equation with size $n = 39204$ preconditioned 
	 with the Cholesky factorizations with either no fill-in  or with  drop tolerances of $\tau = 10^{-3}, 10^{-5}$.
	\begin{center}
	\begin{tabular}{c|ccc}
		& $\alpha$ & $\beta$ \\
		\hline
		IC(0) & $8.504 \times 10^{-4}$ & 1.2057 \\
	$\tau = 1e-3$  & $2.253 \times 10^{-2}$ & 1.1445 \\
	$\tau = 1e-5$  & $0.5097              $ & 1.0998 \\
		\end{tabular}
	\end{center}
%
	In practice, it is rather easy to guarantee that 
	\[d = \min_{\fy \in \R^n} q_{A^{-1}}(\fy) - q_{(LL^T)^{-1}}(\fy)  = \lambda_{\min}\left(
	A^{-1} - (L L^T)^{-1}\right)  \]  is bounded away from zero by first roughly estimating
	$\beta$ and then simply scaling the preconditioner 
	factor $L$ by a scalar to satisfy $\beta < 1$. 
\end{remark}
%
%

	\subsection{Effects of the low-rank update on the eigenvalues of $P_+ J_+$}\label{sec:sr1_32}
	The following Theorem will state that the SR1 correction is able to set approximately to 1 one eigenvalue
	of the preconditioned matrix.
	\begin{theorem}
		If $P_c$ is SPD and $\delta$ is small enough so that the conclusions of Lemma \ref{lemma} hold,
		if moreover the denominator in  (\ref{SR1}) is positive 
		(with $\|P_+\| = \alpha_1$),
	then there is $\delta$ such that if 
	$ 0 < \|\fe_c\| \le \delta $
	then
		\[ P_+ J_+ \fs = \fs + \fw, \qquad \text{with} \quad \|\fw\| \le 6 \alpha_1 \gamma \delta^2. \]
	\label{th1}
	\end{theorem}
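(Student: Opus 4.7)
The plan hinges on the secant condition $P_+\fy = \fs$ satisfied by the SR1 update, which gives us a clean algebraic starting point. Specifically, I would write
\[
\fw \;=\; P_+ J_+ \fs - \fs \;=\; P_+ J_+ \fs - P_+ \fy \;=\; P_+ (J_+ \fs - \fy),
\]
so that the whole problem reduces to bounding $\|J_+\fs - \fy\|$ and then multiplying by $\|P_+\| = \alpha_1$.

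To bound $\|J_+\fs - \fy\|$, I would split it as $(J_+ - J_c)\fs - \Delta_1 \fs$, using the identity $\fy = J_c\fs + \Delta_1\fs$ from (\ref{ys0}) in Lemma \ref{lemma}. For the first piece, Lipschitz continuity of $J$ together with $\fx_+ - \fx_c = \fs$ yields $\|J_+ - J_c\| \le \gamma\|\fs\|$, and then combining with (\ref{se}) gives
\[
\|(J_+ - J_c)\fs\| \;\le\; \gamma \|\fs\|^2 \;\le\; 4\gamma \|\fe_c\|^2 \;\le\; 4\gamma \delta^2.
\]
For the second piece, the Lemma already gives $\|\Delta_1\| \le \gamma\|\fe_c\|$, and using (\ref{se}) once more,
\[
\|\Delta_1 \fs\| \;\le\; \gamma \|\fe_c\| \cdot 2\|\fe_c\| \;\le\; 2\gamma \delta^2.
\]

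Adding these two contributions produces $\|J_+ \fs - \fy\| \le 6\gamma\delta^2$, and applying $P_+$ (whose norm is $\alpha_1$ by hypothesis) yields the stated bound $\|\fw\| \le 6\alpha_1 \gamma \delta^2$. There is no real obstacle here once one spots the secant-condition rewriting; the only delicate bookkeeping is making sure the two contributions assemble into exactly the constant $6$ claimed in the theorem, which forces us to use both the crude bound $\|\fs\| \le 2\|\fe_c\|$ and Lipschitz continuity across both $J_+-J_c$ and $\Delta_1$. The standing assumption that $\delta$ is small enough for Lemma \ref{lemma} to apply is what is being absorbed into the quadratic factor $\delta^2$.
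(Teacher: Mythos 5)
Your proof is correct and follows essentially the same route as the paper: both arguments rewrite the secant condition via $\fy = J_c\fs + \Delta_1\fs$, split the discrepancy into the $(J_+-J_c)\fs$ and $\Delta_1\fs$ contributions, and bound them by $4\gamma\delta^2$ and $2\gamma\delta^2$ respectively using Lipschitz continuity and $\|\fs\|\le 2\delta$. The only cosmetic difference is that you factor $P_+$ out before estimating, whereas the paper distributes it onto each term; the constants assemble identically.
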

	\begin{proof}
	The secant condition
		$P_+ \fy = \fs $ can be rewritten using (\ref{ys1}) as
		\[ P_+ (J_c \fs + \Delta_1 \fs) = \fs\]
		which is equivalent to
		\[ P_+ J_+ \fs = \fs - P_+ \Delta_1 \fs + P_+ (J_+ - J_c) \fs  = \fs + \fw\]
		with 
		\[ \|\fw\| \le \alpha_1 \gamma \delta \|\fs\| + \alpha_1 \gamma \|\fs\|^2 \le \ (\text{using }
		\|\fs\| \le 2\|\fe_c\| \le 2 \delta) \ \le
		6 \alpha_1 \gamma  \delta^2. \]
	\end{proof}

The previous theorem shows that the preconditioned matrix $P_+J_+$ has an approximate eigenvector $\fs$
corresponding to the approximate eigenvalue 1, the goodness of the
approximation depending on how $\fx_c$ is close to the exact solution 
$\fx_*$.  Regarding all other eigenvalues we can observe that  $P_+J_+$ is similar to the SPD matrix $\Jp P_+ \Jp$
so we denote by $\hJ$  this new matrix and by $\hJc$ its analogous at step $k$, namely  $\Jc P_c \Jc$.
The eigenvalue distribution of $\hJ$ is described by Theorem \ref{ldist}. We will handle the simpler
case where $J_c$ and $J_+$ commute and we premise the following 
\begin{lemma}
If $J_c$ and $J_+$ commute and 
		$\delta$ is small enough so that the conclusions of Lemma \ref{lemma} hold, then
	\begin{equation}
\hJ = \hJc + \fz \fz^T + \Delta_2
		\label{hatJ}
	\end{equation}
where
\begin{eqnarray}
	\label{z}
	\fz & = & \frac{\Jc (\fs - P_c \fy)}{\sqrt{\fy^T(\fs - P_c \fy)}}, \qquad \Delta_{J^{1/2}} = 
	\Jp - \Jc\\
	\label{Delta2}
	\Delta_2 & = &  \Dhalf P_+  \Dhalf +   \Dhalf P_+  + P_+ \Dhalf \\
	\label{Delta2norm}
	\|\Delta_2 \| &\le& c_3 \delta, \quad \text{with} \quad 
c_3 =  4 \alpha_1 \gamma (1+ \sqrt{\alpha_c}) \delta 
\end{eqnarray}
\end{lemma}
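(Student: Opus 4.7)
The plan is to exploit the additive structure on both sides of $\hJ = \Jp P_+ \Jp$: write $\Jp = \Jc + \Dhalf$ so that the expansion cleanly separates a ``base term'' $\Jc P_+ \Jc$ from three cross terms, and then substitute the SR1 update into the base term to expose the rank-one piece $\fz \fz^T$.

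Concretely, I first expand
\[
\hJ = (\Jc + \Dhalf) P_+ (\Jc + \Dhalf) = \Jc P_+ \Jc + \Jc P_+ \Dhalf + \Dhalf P_+ \Jc + \Dhalf P_+ \Dhalf,
\]
and plug the SR1 formula (\ref{SR1}) into the first summand to obtain
\[
\Jc P_+ \Jc = \Jc P_c \Jc + \frac{\Jc(\fs - P_c \fy)(\fs - P_c \fy)^T \Jc}{\fy^T(\fs - P_c \fy)} = \hJc + \fz \fz^T,
\]
with $\fz$ exactly the vector defined in (\ref{z}). Collecting the three remaining cross terms recovers $\Delta_2$ as in (\ref{Delta2}) (modulo the evident $\Jc$ factors on the two mixed terms).

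For the norm bound (\ref{Delta2norm}) the hypothesis $\|P_+\| \le \alpha_1$ reduces everything to estimates of $\|\Dhalf\|$ and $\|\Jc\|$. The key quantity is $\|\Dhalf\| = \|\Jp - \Jc\|$, and this is where the commutativity of $J_c$ and $J_+$ is essential: since they are simultaneously orthogonally diagonalizable, so are their square roots, and one has the clean identity
\[
\Jp - \Jc = (J_+ - J_c)(\Jp + \Jc)^{-1}.
\]
Hence $\|\Dhalf\| \le \|J_+ - J_c\|/(2\mu^{1/2})$, where $\mu$ is a uniform lower bound on the eigenvalues of $J_c$ and $J_+$ (for instance $\mu = 1/\alpha_c$ after shrinking $\delta$ so that $\fx_+$ also lies in the ball around $\fx_*$, via Lemma \ref{Banach} and Proposition \ref{prop}). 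Combining this with $\|J_+ - J_c\| \le \gamma\|\fs\| \le 2\gamma\delta$ from (\ref{se}) gives $\|\Dhalf\| = O(\sqrt{\alpha_c}\,\gamma\delta)$; bounding $\|\Jc\|$ via the standard assumptions and inserting these estimates into the three summands of $\Delta_2$ then yields the stated constant $c_3 = 4\alpha_1\gamma(1+\sqrt{\alpha_c})\delta$ after elementary bookkeeping.

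The main obstacle is precisely the Lipschitz control of the matrix square root: without the commutativity hypothesis one only has H\"older-$1/2$ continuity of $A \mapsto A^{1/2}$ in general, which would cost an order in $\delta$ and destroy the $O(\delta^2)$ rate. Commutativity trivializes this step by letting $\Jp + \Jc$ be inverted directly, and the rest of the argument is routine algebra together with the estimates of Lemma \ref{lemma}.
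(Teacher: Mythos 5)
Your proof is essentially the paper's own argument: the same expansion $\Jp = \Jc + \Dhalf$ combined with the SR1 formula to isolate $\fz\fz^T$ (you merely apply the SR1 substitution after expanding rather than before, which changes nothing), and the same commutativity-based identity $J_+ - J_c = (\Jp-\Jc)(\Jp+\Jc)$ to obtain $\|\Dhalf\| = O(\gamma\delta\sqrt{\alpha_c})$ from the Lipschitz bound $\|J_+-J_c\|\le 2\gamma\delta$. You also correctly flag the one genuine discrepancy, namely that the mixed terms of $\Delta_2$ should carry $\Jc$ factors (so that the constant $c_3$ would need a bound on $\|\Jc\|$), which the paper's stated definition and norm estimate silently omit.
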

\begin{proof}
	Multiplying (\ref{SR1}) by $\Jp$ on the left and on the right yields
	\[\hJ =  \Jp P_c \Jp  + \frac{\Jp (\fs - P_c \fy) (\fs - P_c \fy)^T\Jp }{\fy^T  (\fs - P_c \fy)}.\]
	Writing $\Jp = \Jc + (\Jp - \Jc)$ and substituting at the right hand side yield (\ref{hatJ}) with
	definitions (\ref{z}) and (\ref{Delta2}). To obtain (\ref{Delta2norm}) we develop
	the following bounds
\begin{eqnarray*}
	\|J_+ ^{1/2} - J_c ^{1/2}\|  &\le& \|J_+-J_c\| \|\left(J_+^{1/2}  + J_c^{1/2}\right) ^{-1}  \|  \le  
2 \gamma \delta  
 \frac{1}{\lambda_{\min} \left(J_+^{1/2} + J_c^{1/2}\right)}  
 \le   \frac{2\gamma \delta}{\lambda_{\min} ( J_c^{1/2})} = 2 \gamma \delta \sqrt{\alpha_c} .
\end{eqnarray*}
Hence 
	\[\|\Delta_2 \| \le \alpha_1\left(4 \gamma^2 \delta^2 \alpha_c + 4 \gamma \delta \sqrt {\alpha_c}\right) \le
	\ (\text{recalling that} \ \delta\gamma\alpha_c < 1) \ \le 4 \alpha_1 \gamma (1+ \sqrt{\alpha_c}) \delta
	\equiv c_3 \delta.\]
\end{proof}

We are able to state the final theorem regarding eigenvalue distribution for $\hJ$, assuming all eigenvalues of $\hJ$ and
	$\hJc$ are ordered increasingly. Denote furthermore with $\fv_1$ the eigenvector of $\hJc$ corresponding to $\lambda_1(\hJc)$
	\begin{theorem}
		\label{ldist}
		If $J_c$ and $J_+$ commute then
		\begin{eqnarray}
			\label{lmin}
			&\lambda_{1} (\hJ) &\ge  \lambda_{1} (\hJc) + \sqrt{\lambda_2(\hJc) - \lambda_1(\hJc)}\left |\fv_1^T \fz\right |  - c_3 \delta  \\
			\label{lk}
			\lambda_{k} (\hJc) - c_3 \delta  \le & \lambda_{k} (\hJ) & \le \lambda_{k+1} (\hJc) + c_3 \delta,
			\quad k = 2, \ldots, n-1, \\
			\label{lmax}
			&\lambda_{n} (\hJ) &\le  \lambda_{n} (\hJc)  +\|\fz\|^2+ c_3 \delta .
		\end{eqnarray}

	\end{theorem}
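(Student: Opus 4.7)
The plan is to exploit the additive decomposition $\hJ = \hJc + \fz \fz^T + \Delta_2$ supplied by the preceding lemma, together with the norm bound $\|\Delta_2\| \le c_3 \delta$. Set $M := \hJc + \fz \fz^T$; then Weyl's inequality gives $|\lambda_k(\hJ) - \lambda_k(M)| \le \|\Delta_2\| \le c_3 \delta$ for every $k$. Consequently, the three estimates reduce to proving the analogous bounds for the eigenvalues of the rank-one update $M$ of $\hJc$; the $\pm c_3 \delta$ terms appearing in (\ref{lmin})--(\ref{lmax}) are precisely this Weyl contribution.

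For the interior inequalities (\ref{lk}) I would invoke the classical Cauchy-type interlacing theorem for rank-one positive semidefinite updates,
\[
\lambda_k(\hJc) \le \lambda_k(M) \le \lambda_{k+1}(\hJc), \qquad k = 1,\ldots,n-1,
\]
which combined with the Weyl step gives (\ref{lk}) immediately. For the extremal upper bound (\ref{lmax}), I would use $\lambda_n(M) \le \lambda_n(\hJc) + \|\fz \fz^T\| = \lambda_n(\hJc) + \|\fz\|^2$ and apply Weyl once more.

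The delicate part is the refined lower bound (\ref{lmin}). My plan is to diagonalize $\hJc = \sum_{i=1}^n \lambda_i \fv_i \fv_i^T$ and expand $\fz = \sum_i \zeta_i \fv_i$ with $\zeta_i = \fv_i^T \fz$. The smallest eigenvalue $\mu_1$ of $M$ lies in $[\lambda_1,\lambda_2]$ by interlacing and, in the nondegenerate case, satisfies the secular equation
\[
(\mu_1 - \lambda_1)\left(1 + \sum_{i \ge 2} \frac{\zeta_i^2}{\lambda_i - \mu_1}\right) = \zeta_1^2.
\]
Since $\lambda_i - \mu_1 \ge \lambda_2 - \mu_1 \ge 0$ for every $i \ge 2$, I would extract from this identity a quadratic-type inequality for $\tau := \mu_1 - \lambda_1$ that produces the desired bound $\tau \ge \sqrt{\lambda_2 - \lambda_1}\,|\zeta_1|$; passing from $M$ to $\hJ$ via Weyl then yields (\ref{lmin}). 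The main obstacle is precisely this last algebraic manipulation: the secular equation must be rearranged so that the explicit factor $\sqrt{\lambda_2(\hJc) - \lambda_1(\hJc)}$ emerges, and the boundary situations $\mu_1 = \lambda_1$ (i.e.\ $\zeta_1 = 0$) and $\mu_1 = \lambda_2$ (which can occur only when some $\zeta_i$ vanishes) have to be treated as limits of the generic case.
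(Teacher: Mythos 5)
Your treatment of (\ref{lk}) and (\ref{lmax}) is correct and is essentially the paper's own argument made explicit: the paper simply invokes Weyl's theorem for these two bounds, and your two-step splitting (Weyl for the symmetric perturbation $\Delta_2$, then interlacing for the rank-one positive semidefinite update --- itself a special case of the general Weyl inequalities) is exactly what that invocation unpacks to. The divergence, and the genuine gap, concerns (\ref{lmin}). The paper does not prove this bound at all; it cites Corollary 2.3 of Ipsen and Nadler. You instead try to derive it from the secular equation, and you correctly identify the extraction of the factor $\sqrt{\lambda_2(\hJc)-\lambda_1(\hJc)}\,|\fv_1^T\fz|$ as the main obstacle. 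That obstacle is not merely delicate algebra: it is insurmountable, because the inequality $\lambda_1(\hJc+\fz\fz^T)\ge\lambda_1(\hJc)+\sqrt{\lambda_2(\hJc)-\lambda_1(\hJc)}\,|\fv_1^T\fz|$ is false for a general symmetric matrix and vector. Take $\hJc=\mathrm{diag}(\eta,1+\eta)$ with $\eta$ tiny and $\fz=(\epsilon,0)^T$: then $\lambda_1(\hJc+\fz\fz^T)=\eta+\epsilon^2$, while the claimed lower bound is $\eta+\sqrt{1}\cdot\epsilon$, and $\epsilon^2<\epsilon$ for small $\epsilon$.

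What the secular equation actually yields --- after bounding $\lambda_i-\mu_1\ge(\lambda_2-\lambda_1)-\tau$ for $i\ge 2$ and rearranging --- is an estimate of the form $\tau\ge\dfrac{(\lambda_2-\lambda_1)\,|\fv_1^T\fz|^2}{(\lambda_2-\lambda_1)+\|\fz\|^2+|\fv_1^T\fz|^2}$. This harmonic-mean-type quantity is always \emph{at most} the geometric mean $\sqrt{\lambda_2-\lambda_1}\,|\fv_1^T\fz|$, so the comparison runs in the wrong direction for your purposes (and in the example above it correctly predicts $\tau\approx\epsilon^2$). To the best of my reading this quotient is also the form of the Ipsen--Nadler bound that the paper cites, so the discrepancy appears to originate in the statement of (\ref{lmin}) itself rather than in your strategy; but as written, your proposal cannot close this step, and no argument can, unless the geometric-mean expression in (\ref{lmin}) is replaced by the weaker quotient above.
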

\begin{proof}
	Bound (\ref{lmin}) comes from Corollary 2.3 in \cite{IpsNad09}. Bounds (\ref{lk}) and (\ref{lmax})
	are a direct consequence of 
Weyl's theorem \cite[Theorem 8.1.8]{gol91}. \end{proof}

\noindent
Using the previous theorem we can state a bounded deterioration property regarding the condition
number of the preconditioned jacobians:
	\begin{corollary}
		If $J_c$ and $J_+$ commute and
		\begin{equation} \label{deltac3}
		\delta < \frac{\sqrt{\lambda_2(\hJc) - \lambda_1(\hJc)}\left |\fv_1^T \fz\right |}{c_3} \end{equation}
		then
		\[ \kappa(\hJ) \le \left(1 + \frac{\|\fz\|^2}{\lambda_n(\hJc)} + O(\delta)\right) \kappa(\hJc). \]
	\end{corollary}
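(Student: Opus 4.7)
The plan is to use the two bounds on the extremal eigenvalues of $\hJ$ from Theorem \ref{ldist} and directly form the ratio $\kappa(\hJ) = \lambda_n(\hJ)/\lambda_1(\hJ)$. The key observation is that the hypothesis (\ref{deltac3}) is exactly what is needed to guarantee that the correction on the smallest eigenvalue is nonnegative, so that $\lambda_1(\hJ) \ge \lambda_1(\hJc)$; this lets us bound the denominator from below by a $\delta$-independent quantity.

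More precisely, first I would combine (\ref{lmin}) with (\ref{deltac3}) to obtain
\[
\lambda_1(\hJ) \ge \lambda_1(\hJc) + \sqrt{\lambda_2(\hJc) - \lambda_1(\hJc)}\,|\fv_1^T \fz| - c_3 \delta \ge \lambda_1(\hJc),
\]
and then plug (\ref{lmax}) into the numerator to get
\[
\kappa(\hJ) \le \frac{\lambda_n(\hJc) + \|\fz\|^2 + c_3 \delta}{\lambda_1(\hJc)}
= \kappa(\hJc)\left(1 + \frac{\|\fz\|^2 + c_3 \delta}{\lambda_n(\hJc)}\right).
\]
Isolating the $\delta$ term gives the claimed factor $1 + \|\fz\|^2/\lambda_n(\hJc) + O(\delta)$, since $c_3$ is an absolute constant (depending only on $\alpha_1$, $\alpha_c$, $\gamma$).

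There is no real obstacle here: the corollary is essentially a packaging of the three inequalities already proved in Theorem \ref{ldist}, together with the threshold on $\delta$ that makes the lower bound on $\lambda_1(\hJ)$ useful. The only subtlety worth a sentence in the write-up is that the bound is vacuous (or at least loose) if $\lambda_2(\hJc) = \lambda_1(\hJc)$ or if $\fv_1 \perp \fz$, which is why the hypothesis (\ref{deltac3}) is stated as a strict inequality; in those degenerate cases one can only deduce $\lambda_1(\hJ) \ge \lambda_1(\hJc) - c_3\delta$, and the deterioration bound acquires an extra additive $O(\delta)$ term in the overall constant but is otherwise unchanged.
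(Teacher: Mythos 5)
Your argument is correct and is essentially identical to the paper's own proof: both use hypothesis (\ref{deltac3}) together with (\ref{lmin}) to get $\lambda_1(\hJ) \ge \lambda_1(\hJc)$, bound the numerator by (\ref{lmax}), and rewrite the resulting ratio as $\bigl(1 + (\|\fz\|^2 + c_3\delta)/\lambda_n(\hJc)\bigr)\kappa(\hJc)$. Your closing remark on the degenerate cases $\lambda_2(\hJc)=\lambda_1(\hJc)$ or $\fv_1 \perp \fz$ is a reasonable observation not made in the paper, but the core derivation matches.
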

	\begin{proof}
		If (\ref{deltac3}) holds,  then from (\ref{lmin}) we obtain $\lambda_{1} (\hJ) \ge  \lambda_{1} (\hJc) $ hence
		\[ \kappa(\hJ)  = \frac{\lambda_n(\hJ)}{\lambda_1(\hJ)}  \le 
		                  \frac{\lambda_{n} (\hJc)  +\|\fz\|^2+ c_3 \delta }{\lambda_1(\hJc)}
				  = \frac{\lambda_{n} (\hJc)  +\|\fz\|^2+ c_3 \delta }{\lambda_n(\hJc)} \kappa(\hJc). \]
	\end{proof}

Summarizing the findings of this section, $P_+ J_+$ has an approximate eigenvalue at one thus improving the spectral
properties of $P_c J_c$. Moreover, the condition number of $P_+ J_+$ may be only slightly worse than that of $P_c J_c$,
depending on $\|\fz\|$, which in turn can be controlled by the boundedness of the denominator of $\fz$ 
in (\ref{z}) as per Remark \ref{rem}.
\subsection{Compact representation of the SR1 update}
To develop the 
compact representation  of the inverse update (\ref{SR1})  we recall the definitions in (\ref{ZRH}):
\[
Z_k = P_0 Y_k, \qquad  (R_k) _{i,j} = \begin{cases} \fs_{i-1}^T \fy_{j-1} & \text{if} \ i \le j, \\
0  & \text{otherwise} \end{cases}, \qquad H_k = D_k +  Y_k^T P_0 Y_k .
\]
and set
\[{M}_{k} =  R_{k} + R^T_{k} - H_k, \qquad  {Q}_{k} = S_{k}-Z_{k}. \]
Then  the compact formula for the SR1 update in inverse form is \cite{byrd-et-al}
\begin{equation}
	\label{SR1compact}
	P_k = P_0 +  {Q}_{k} {M}_{k}^{-1}  {Q}^T_{k} .
\end{equation}
Matrix ${M}_{k}$ can be computed using the following recursive characterization
\begin{equation}\label{equ:Mkinverse}
{M}_{k} = \left[ \begin{array}{cc}
	{M}_{k-1} & {\fw}_{k-1} \\ 
	{\fw}_{k-1}^T & {\fq}_{k-1}^T\fy_{k-1}
\end{array} \right] \ ,
\end{equation}
where ${\fw}_{k-1} = {Q}_{k-1}^T \fy_{k-1}$.  Therefore, the inverse update only requires the computation of the vector  ${\fq}_{k-1} = \fs_{k-1} - P_0 \fy_{k-1}$ and the matrix update (\ref{equ:Mkinverse}). 

A simple strategy to prevent break down in the SR1 is just skipping the update if the denominator in (\ref{equ:directSR1}) or (\ref{SR1}) is small. That is, the update is applied if
\begin{equation}
	\label{SR1test}
 | \fs_k^T(\fy_k - B_k \fs_k) | \ge r \| \fs_k\| \| \fy_k - B_k \fs_k\| \ ,
\end{equation}
where $r \in (0,1)$ is a small number, for instance $r = 10^{-4}$. 

\subsection{Computation of $\hat \fr = P_k \fr$}
At step $k$, previously to the PCG solution of the $k$-th Newton system, we have to compute 
$ \fq_{k-1} = \fs_{k-1}-\fz_{k-1}$ by solving the linear system $B_0 \fz_{k-1} = \fy_{k-1}$. Then we have to form
recursively
\[ 
{Q}_k = \left[{Q}_{k-1},  {\fq}_{k-1} \right], \quad
{M}_k = \begin{bmatrix} {M}_{k-1} &  {Q}_{k-1}^T \fy_{k-1} \\ \fy_{k-1}^T {Q}_{k-1} & {\fq}_{k-1}^T \fy_{k-1} \end{bmatrix} \ .
\]
Algorithm \ref{alg:compactPkr-SR1} shows the application of $P_k$ to the residual vector $\fr$.
The storage  memory and number of operations needed are just half of the ones required with the compact BFGS formula.
\begin{algorithm}[ht]
	\begin{algorithmic}
		\State 1. Solve $B_0 \hat{\fr} = \fr$;
		\State 2. $\fw_1 = {Q}_k^T \fr$; 
		\State 3. Solve ${M}_k \fw_2 = \fw_1;$ 
		\State 4. $\hat{\fr}  = \hat{\fr} + {Q}_k \fw_2$; 
	\end{algorithmic}
	\caption{Computation of $\hat \fr = P_k \fr$  with compact (L-)SR1}\label{alg:compactPkr-SR1} 
\end{algorithm}
\subsection{Limited memory implementation L-SR1}
If $k > k_{\max} $ then the update of matrices ${Q}$ and ${M}$ changes as follows
(in MATLAB notation):
\begin{eqnarray*}
	{Q}_{k-1} & \leftarrow & {Q}_{k-1}(:,2:k_{\max}) \\
	{M}_{k-1} & \leftarrow & {M}_{k-1}(2:k_{\max}, 2:k_{\max})
\end{eqnarray*}
\[ 
{Q}_k = \left[{Q}_{k-1},  {\fq}_{k-1} \right], \quad
{M}_k = \begin{bmatrix} {M}_{k-1} &  {Q}_{k-1}^T \fy_{k-1} \\ \fy_{k-1}^T {Q}_{k-1} & {\fq}_{k-1}^T \fy_{k-1} \end{bmatrix} \ .
\]
\section{Numerical experiments}\label{sec:experiments}
In this section we present the results of numerical experiments for solving different nonlinear test problems of large size. As initial preconditioner $P_0$ an incomplete Cholesky factorization (IC) of the initial Jacobian was often used. Therefore, its application requires the solution of two triangular linear systems. Additionally, some results with Jacobi preconditioning, i.e., $B_0 = \diag{(J(\fx_0))}$,  are also presented. The numerical experiments are performed with MATLAB running in a Windows OS PC equipped  with an Intel i5-6600k CPU processor and 16Gb RAM. The CPU times are measured in seconds with the functions \texttt{tic} and \texttt{toc}. 

In Sections 4.1 and 4.2 the IC preconditioner was computed with the \texttt{ichol} function with
drop tolerances $0.1$ and $0.01$. For the solution of the linear systems in (\ref{equ:newtonstep1}) the MATLAB
function \texttt{pcg} was used with stopping criterion the relative
residual norm below  $10^{-6}$ and allowing a maximum of 2000 iterations. 
The nonlinear iterations were stopped whenever $\| F(\fx_k) \| \le 10^{-10} \| F(\fx_0) \|$. 
The initial solution is the vector $\fx_0 = \left[0.1, \ldots, 0.1 \right]^T$.

The results are presented either with graphics or tables. 
In the tables, the total number of linear iterations (totlin) and total CPU time required to solve the problem are reported. 
The number of nonlinear iterations (nlit) is recalled in the caption since it is independent of the selected preconditioner.

The objectives of the experiments were
\begin{itemize} \item to compare standard vs compact formulas
		\item to study the convergence behavior with different values of $k_{\max}$ (size of the update pair set or update window)
		\item to compare the acceleration provided by L-BFGS and L-SR1 compact formulas.
\end{itemize}
All these aspects have been investigated considering a number of nonlinear problems.

\subsection{Discrete Bratu  and modified PHI-2 problems}
The  discrete Bratu problem consists in solving the nonlinear problem
$$A \fu = \lambda D(\fu), \ D = \diag{(\exp(u_1), \ldots, \exp(u_n))}$$
where $A$ is an SPD matrix arising from a 2d discretization of the diffusion equation on a unitary domain, an $\lambda$ is a real parameter.

The second problem considered in this work corresponds to the nonlinear problem
$$A \fu = \lambda D(\fu), \ D = \diag{(u_1^3, \ldots, u_n^3)} \ . $$
This problem is a variant of the PHI-2 equation \cite{GravesM} to maintain the Jacobian SPD. As for the Bratu problem, $A$ is the discretization of the Laplacian operator, $\lambda = -1$.

\begin{figure}[h!]
	\begin{center}
		\includegraphics[width=0.6\linewidth]{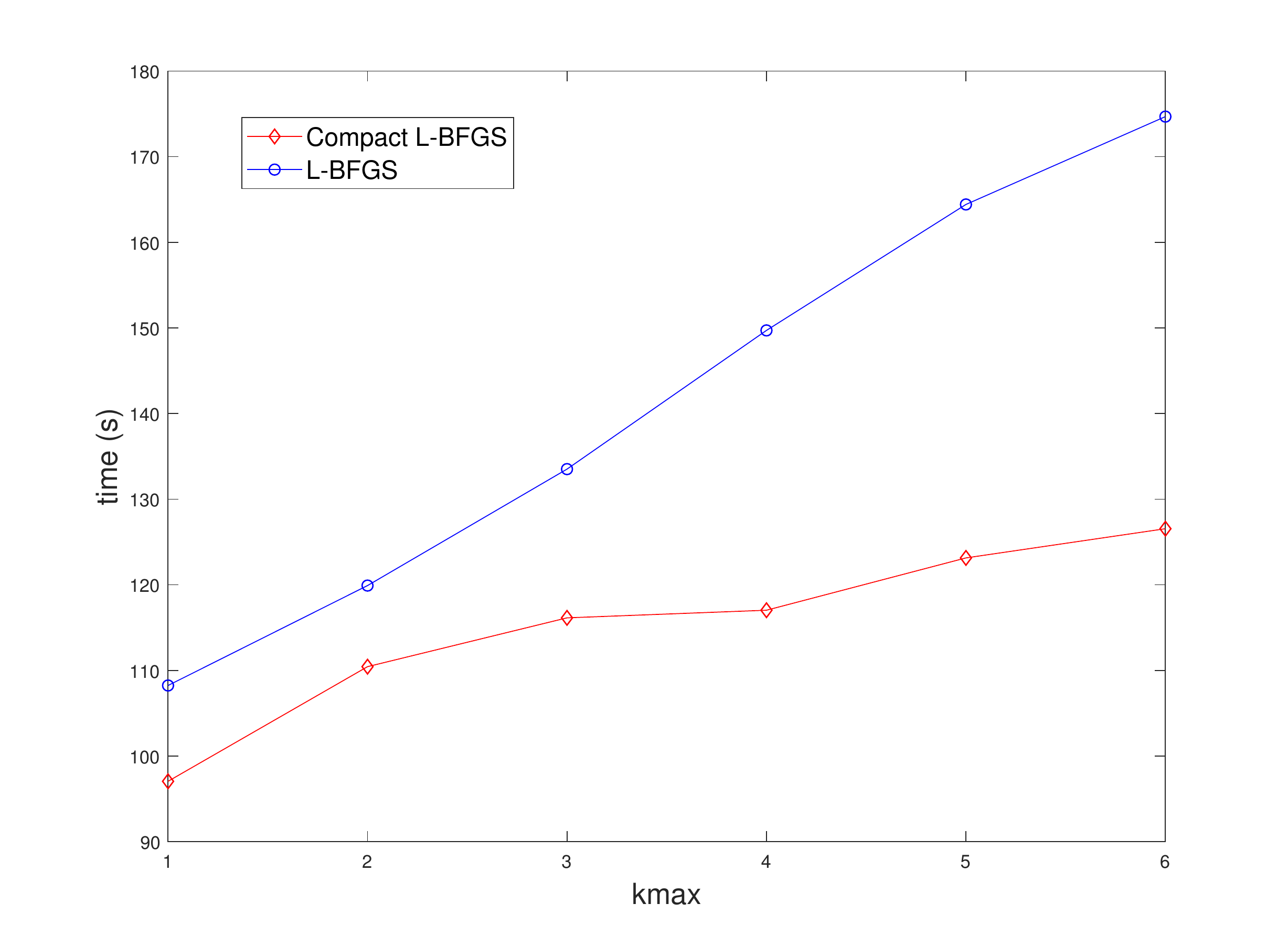}
		\vspace{-4mm}
		\caption{Bratu problem  $n = 747003$, $nnz = 3\,731023 $. Compact vs standard L-BFGS formulas. Initial preconditioner IC(0.01)}\label{figura1}
	\end{center}
\end{figure}

Figure \ref{figura1} shows a comparison between the standard and compact L-BFGS formulas for one of the different discretizations of the Bratu problem tested. It shows clearly that the compact implementation can achieve better computational performance for increasing values of $k_{\max}$, the size of the update window. In our experiments, as it could be expected, it was observed that the difference in performance increases when a large number of linear iterations must be performed, 
situation that is likely to happen when the nonlinear iteration method is approaching the solution (since in these model problems
the ill-conditioning of the jacobians increases toward the end of the Newton process), or when a 
poor initial preconditioner is used. 
Similar results where encountered for the other problems tested, also for the L-SR1 formulas. 
In this case, the gain using the compact formulas is less evident 
since the number of vectors used to update of the preconditioners is half the number of vectors used with L-BFGS.

\begin{figure}[h!]
		\begin{center}
			\begin{minipage}{8.1cm}
	\includegraphics[width=1\linewidth]{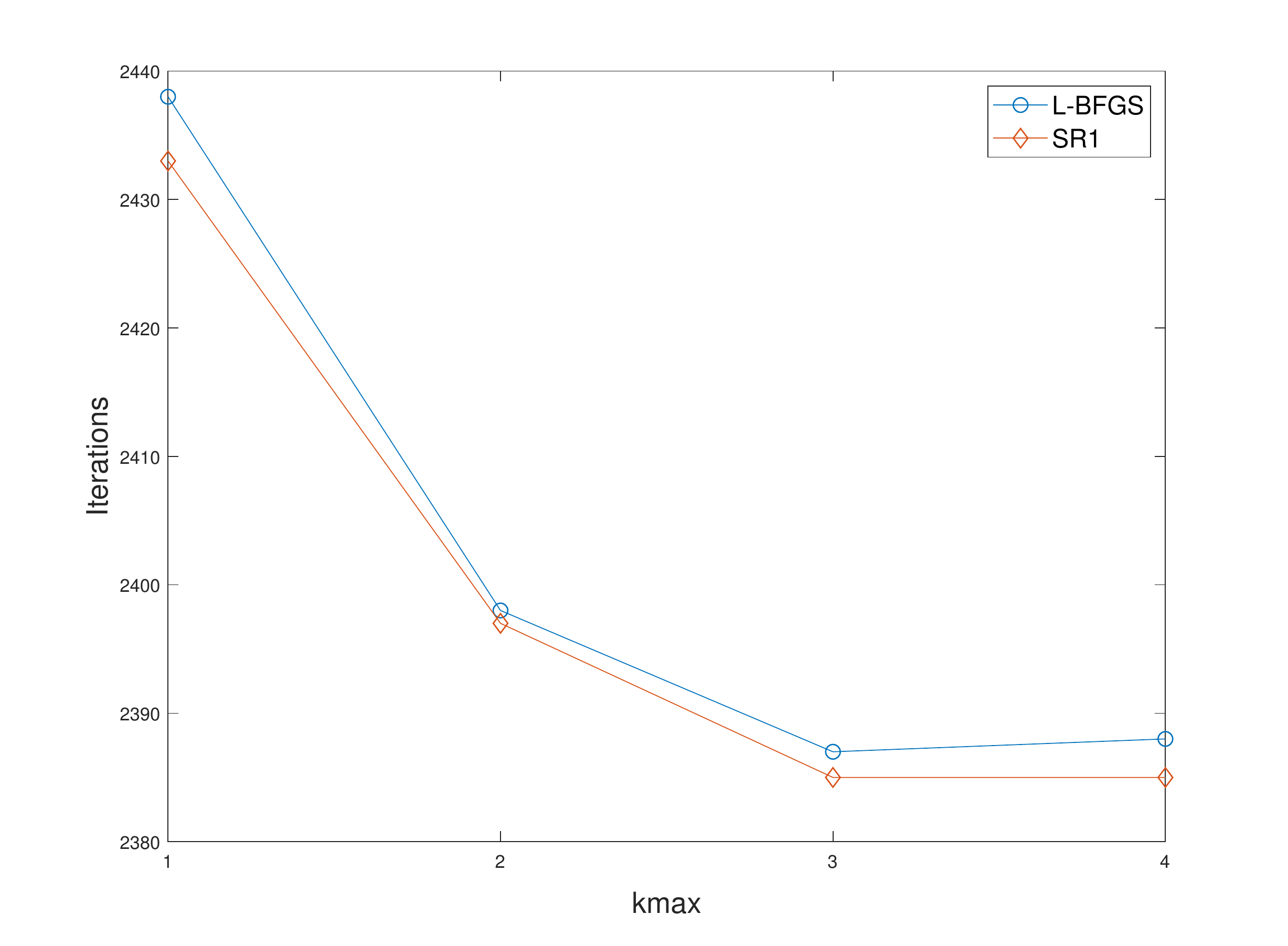}
			\end{minipage}
			\begin{minipage}{8.1cm}
	\includegraphics[width=1\linewidth]{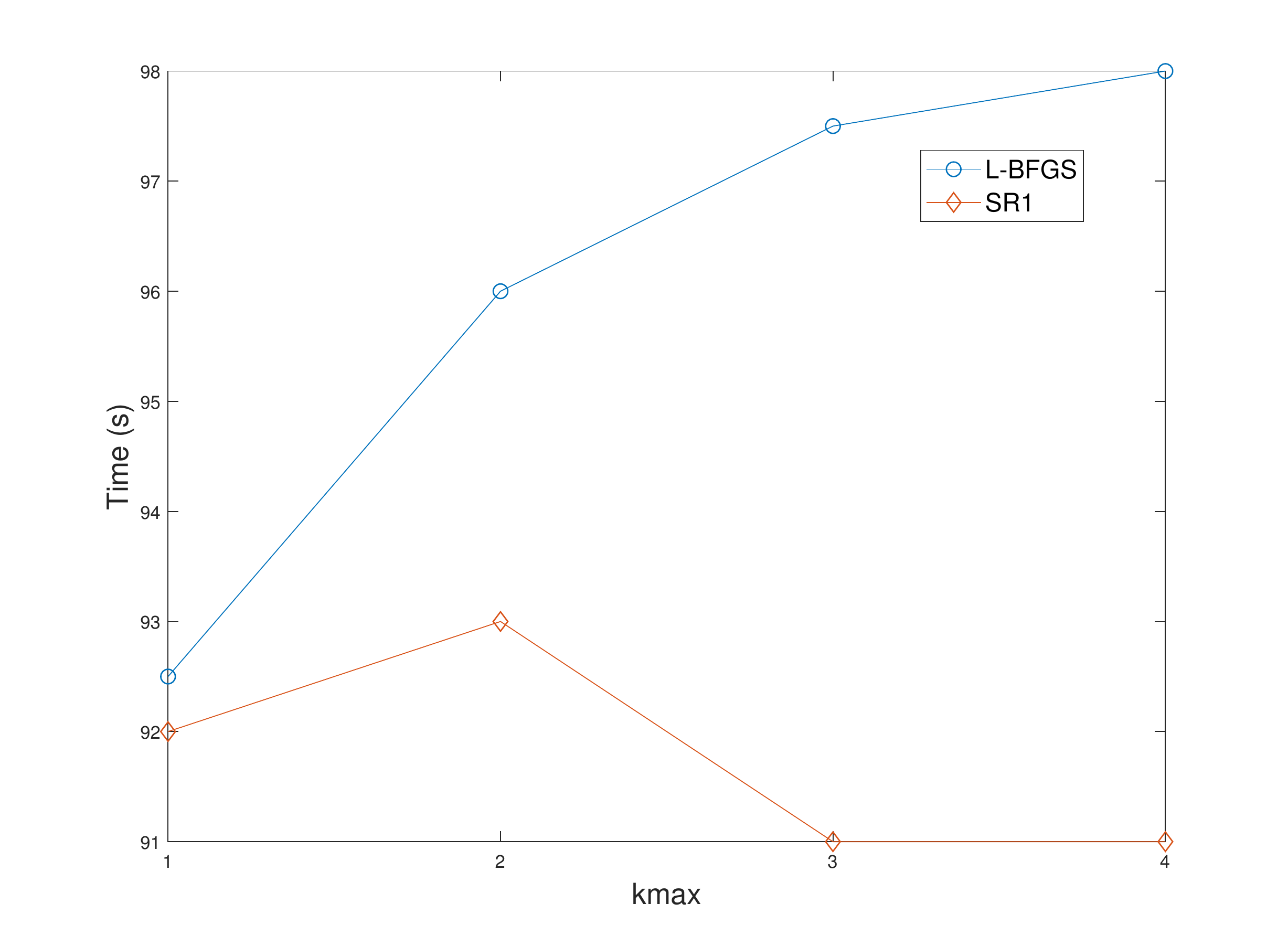}
			\end{minipage}
	\caption{Compact L-BFGS vs L-SR1. Phi-2 problem $n = 747003$, $nnz = 3\,731023 $. Initial preconditioner IC(0.01)}\label{figura2}
		\end{center}
\end{figure}

\noindent
Figure \ref{figura2} shows a comparison between the L-BFGS and L-SR1 formulas in their compact version. It is worth to note that the number of iterations spent by the L-SR1 method is similar, or even smaller in many occasions, than the one required by the L-BFGS. 
This nice behavior gives experimental evidence of the theoretical findings for the SR1 update presented in Section \ref{sec:sr1}. 
As a consequence, the total computational CPU time spent by the Newton method with the L-SR1 was smaller in all our experiments compared to the L-BFGS formula.

\subsection{3d FEM problem}
The third problem considered was a modification of the Bratu problem, where the linear term corresponding to the matrix $A$ arises from a  3D Finite Element discretization of Darcy's law in porous media with heterogeneous hydraulic conductivity coefficient. The size of the problem is $n=268,515$ with a number of nonzero elements  $nnz = 3\, 881337$. For this problem, together IC preconditioning, Jacobi diagonal preconditioning was used to show the effect of starting with a poor preconditioner.

\begin{table}[h!]
	\caption{Results for 3d FEM problem. Number of nonlinear iterations ${\rm nlit} = 15$. Initial preconditioner IC(0.1) -- left, 
		Jacobi -- right.}
	\label{table:3dfem_IC}
	\begin{minipage}{7.3cm}
	\begin{center}
		\begin{tabular}{l|c|c|c}
			preconditioner & $k_{max}$ & totlin & CPU \\
			\hline
			No-update & 0  & $4\,248$ & $66.6$ \\ 
			\hline
			L-BFGS  & 1 & $3\,703$ & $63.5$  \\
			& 2 & $3\,583$ & $67.7$  \\
			& 3 & $3\,519$ & $69.2$  \\
			& 4 & $3\,513$ & $71.1$  \\
			\hline
			L-SR1     & 1 & $3\,673$ & $63.7$  \\
			& 2 & $3\,518$ & $60.9$  \\
			& 3 & $3\,501$ & $62.3$  \\
			& 4 & $3\,493$ & $63.2$  \\
		\end{tabular}
	\end{center}

	\end{minipage}
	\begin{minipage}{7.3cm}
	\begin{center}
		\begin{tabular}{l|c|c|c}
			preconditioner & $k_{max}$ & totlin & CPU \\
			\hline
			No-update & 0  & $9\,124$ & $133.2$ \\ 
			\hline
			L-BFGS  & 1 & $7\,779$ & $123.9$  \\
			& 2 & $7\,501$ & $136.7$  \\
			& 3 & $7\,362$ & $138.8$  \\
			& 4 & $7\,339$ & $139.4$  \\
			\hline
			L-SR1     & 1 & $7\,781$ & $123.6$  \\
			& 2 & $7\,357$ & $119.6$  \\
			& 3 & $7\,192$ & $119.1$  \\
			& 4 & $7\,197$ & $119.2$  \\
		\end{tabular}
	\end{center}
	\end{minipage}
\end{table}

Table \ref{table:3dfem_IC} 
compares the L-BFGS and L-SR1 methods with initial IC and Jacobi preconditioners and  also the no-update
case. It can be observed, analogously to the previous problems, that updating by means of the L-SR1 formula yields 
to the best performance with respect to both total number of linear iterations and  CPU time. 
The time spent to update the preconditioner was negligible compared with the total amount, so it is not indicated. It is worth noting
that for these experiments the best results were obtained with $k_{max}=2,3$ when using L-SR1.
We finally highlight that in all previous experiments, the  SR1 update was always well defined, 
with the denominator in (\ref{SR1}) positive, and the test (\ref{SR1test}) always satisfied, in all Newton iterations.

\subsection{Application to eigenvalue computation}

Given an SPD matrix $A$, to compute its leftmost eigenpair,
the Newton  method in the unit sphere
\cite{simonciniRQI} or Newton-Grassmann method, constructs a sequence of vectors $\{\fu_k\}$ by  solving the linear systems
\begin{eqnarray}
        \label{gras}
	J_k \fs = - \fr_k, \qquad \text{where}\quad   && J_k = (I - \fu_k \fu_k\t) (A - \theta_k I) (I - \fu_k \fu_k\t), \\ \nonumber
	 && \fr_k =  - (A \fu_k - \theta_k \fu_k),  \qquad \text{and} \quad
                                      \theta_k = \dfrac{\fu_k\t A \fu_k}{\fu_k\t \fu_k}
\end{eqnarray}
on a subspace orthogonal to $\fu_k$. Then the next
approximation is set as $\fu_{k+1} = \ft \|\ft\|^{-1}$ where $\ft  = \fu_k + \fs $. Linear system (\ref{gras})
is shown to be better conditioned than the one with $A - \theta_k I$.
The same linear system represents the {\em correction equation} in the well-known Jacobi-Davidson method~\cite{slejpenvdvSIMAX96},
which in its turn can be viewed as an accelerated Inexact Newton method~\cite{tapia}.
When $A$ is SPD and the leftmost eigenpairs are being sought, it has been proved
in \cite{notay} that the Preconditioned Conjugate Gradient (PCG) method can be employed in the solution
of the correction equation. In  this work we follow the PCG implementation of \cite[Algorithm 5.1]{notay}.

We tried our updated preconditioners for the PCG solver within the Newton-Grassmann method 
to compute the leftmost eigenpair
of two large and sparse SPD matrices whose characteristics are reported in Table
\ref{list}. Note that the solution to (\ref{gras}) is not needed to be very accurate.
We then set as the maximum number of inner CG iterations to 50.
The outer Newton process is stopped instead when the following test is satisfied
\[ \|A \fu_k - \theta_k \fu_k\|   = \|\fr_k\|  < \theta_k 10^{-8}.\]
The initial Newton vector  has been computed satisfying $\|\fr_k\| <\theta_k 10^{-2}$.
In both cases the preconditioner is computed once and for all at the beginning of the Newton
iteration as an incomplete Cholesky factorization  with fill-in of {\em matrix $A$}.
\begin{table}[h!]
                \caption{Main characteristics of the matrices used in the tests.}
        \label{list}
        \begin{center}
                \begin{tabular}{llrr|l}
                        \hline
			Matrix & where it comes from & $n$ & $nz$ & drop tolerance \\
                        \hline
			{\sc monte-carlo} & 2D-MFE stochastic PDE   & 77120 & 384320  & $10^{-3}$\\
			{\sc emilia-923} & 3D-FE elasticity problem &  923136  &  41\,005206 & $10^{-5}$ \\
                        \hline
        \end{tabular}
        \end{center}
\end{table}

\subsection*{Matrix \texttt{monte-carlo}}
In Table \ref{monte-carlo} we report the number of outer iterations ({\rm nlit}) the overall number of the linear iterations
(totlin) and the CPU time.
We compared the no-update case and the L-BFGS and L-SR1 updates, both
with $k_{\max} = \{5, 10\}$. Moreover we plot, in Figure \ref{convprof} the nonlinear relative
residual $\|\fr_k\| \theta_k^{-1}$ vs the number of cumulative linear iterations across all the outer Newton iterations.
\begin{table}[h!]
	\caption{Outer/inner iterations an CPU time to evaluate the leftmost eigenpair of matrix \texttt{monte-carlo}.}
	\label{monte-carlo}
	\begin{center}
\begin{tabular}{llll|lll}
 & \multicolumn{3}{c}{$k_{\max} = 10$} &\multicolumn{3}{c}{$k_{\max} = 5$} \\
	\hline
 & nlit & totlin & CPU &nlit & totlin & CPU \\
\hline
L-BFGS  &14  &269   & 4.83 & 15&329&5.66\\
L-SR1   &13  &312  & 4.72  & 14&340&5.07\\
\hline
No Update & 21&  585&  7.50  &&& \\
\end{tabular}
	\end{center}
\end{table}

\begin{figure}[h!]
	\caption{Nonlinear converge profile vs cumulative linear iteration number with different
	preconditioning strategies. Matrix \texttt{monte-carlo}.}
	\label{convprof}
\begin{center}
\includegraphics[width=.72\textwidth]{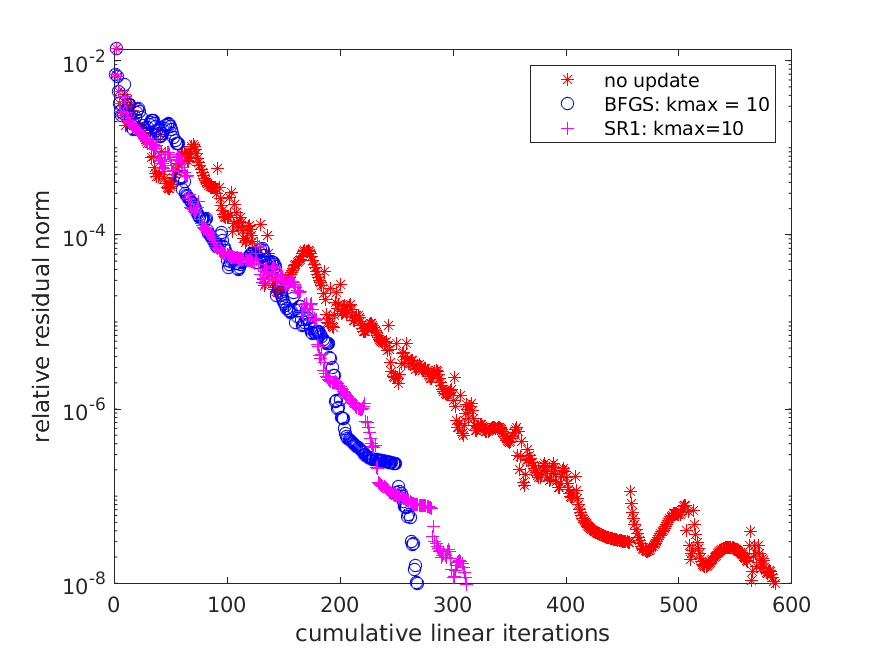}
\end{center}
\end{figure}
From Table \ref{monte-carlo} and Figure \ref{convprof} we can appreciate the remarkable improvement
provided by the compact low-rank updates in terms of inner/outer iterations and CPU time.
Moreover, once again, the L-SR1 update provides comparable results as the L-BFGS correction being (slightly)
the most convenient option in terms of CPU time.

\subsection*{Matrix \texttt{emilia-923}}
We report in Table \ref{emilia} the results for the larger matrix \texttt{emilia-923} where
the L-BFGS acceleration is shown to provide outstanding improvement in both inner/outer iteration
number and CPU time, which is also
accounted for by Figure \ref{convprofEmilia}. The L-SR1 update did not lead to convergence (in fact at 
nonlinear iteration \# 28 the PCG method stopped due to a breakdown). 
\begin{table}[h!]
	\caption{Outer/inner iterations an CPU time to evaluate the leftmost eigenpair of matrix \texttt{emilia-923}.
	For all the update formulas $k_{\max} = 10$ has been set.}
	\label{emilia}
	\begin{center}
\begin{tabular}{llrl}
 & nlit & totlin & CPU \\
\hline
L-BFGS              &    15&  224 &183.22 \\
	L-SR1 (no scaling) &    \dag  &\dag& \dag\\
	L-SR1 (with scaling)&    16  &286& 194.82\\
\hline
	No Update   & 21 &  647&  371.20 \\
	\hline
	\multicolumn{4}{l}{\dag: PCG breakdown found at outer iteration \# 28.}
\end{tabular}
	\end{center}
\end{table}

\noindent
In the light of 
Remark~\ref{rem}, to ensure the positive definiteness of the L-SR1 update, we scaled in this
case the Cholesky triangular matrix by a factor 1.4, after roughly estimating the largest 
eigenvalue of $(LL^T)^{-1} J_0$.  
With this simple modification, the L-SR1 update provides a similar performance as that of  the L-BFGS
update.

\begin{figure}[h!]
	\caption{Nonlinear converge profile vs cumulative linear iteration number with different
	preconditioning strategies. Matrix \texttt{emilia-923}.}
	\label{convprofEmilia}
\begin{center}
\includegraphics[width=.72\textwidth]{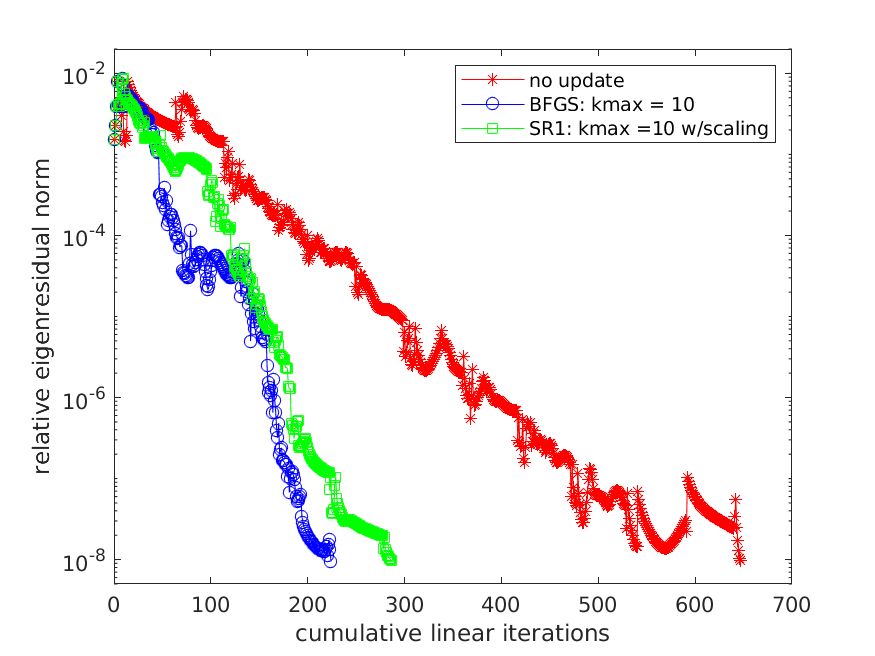}
\end{center}
\end{figure}

As a general comment, regarding Newton method in eigenvalue computation, the effects of the low-rank updates are much
more pronounced as compared to the previous nonlinear problems. Moreover, the relatively high number
of nonlinear iterations and the close to indefinite linear systems to be solved at each Newton step suggest the use
of higher values of the $k_{\max}$ parameter to improve the properties of the low-rank updates.
\section{Conclusions}\label{sec:conclusions}

In this paper a compact version of the rank-two L-BFGS and the rank-one SR1 formulas has
been used to update an initial preconditioner for the solution of nonlinear systems within the Inexact Newton methods. 
One purpose of this study was to show that the compact formulation,
which allows application of the preconditioner in matrix form, can improve the performance obtained in the PCG iteration.
The other objective of the paper was to show that the rank-one update SR1, despite not being guaranteed to
provide SPD preconditioners in all cases, can yet provide SPD sequences 
by simply scaling the initial (Cholesky) preconditioner.
Moreover, it has been shown that the SR1-updated preconditioner is able to shift some eigenvalues towards one, 
and a bounded deterioration property regarding the condition number of the preconditioned jacobians has been stated.

In practice, the construction of the sequence of preconditioners is based on well known limited memory techniques in order 
to keep under control the amount of memory, and also the computational time needed 
to compute and apply the update. From the numerical experiments we can conclude that both the L-BFGS and L-SR1 formulas 
benefit from the use of compact (block) forms. 
More interestingly, the experiments have shown that the L-SR1 may be a good alternative to the L-BFGS formula for 
nonlinear problems with SPD Jacobian.
Work is undergoing to provide a parallel implementation of the compact limited memory Quasi-Newton preconditioners
to improve a given sparse approximate inverse initial preconditioner in the framework e.g. of sequences
of linear systems arising in the eigensolution of very large and sparse matrices in the lines of e.g.~\cite{bm13jam,BMParco17}.

\section*{Acknowledgements}
This work was supported by the Spanish Ministerio de Econom\'{\i}a y Competitividad
under grants MTM2014-58159-P, MTM2017-85669-P and MTM2017-90682-REDT. The first and third authors have
been also partially supported by the INdAM Research group GNCS.
%
%

\newpage
\linespread{1.1}

	\end{document}